\renewcommand{\hbar}{\bar{h}}
\DeclareMathOperator{\res}{{\upharpoonright}}
\newtheorem{thm}{Theorem}[section]
\newtheorem{prop}[thm]{Proposition}
\newtheorem{lem}[thm]{Lemma}
\theoremstyle{definition}
\newtheorem{defn}[thm]{Definition}
\theoremstyle{remark}
\DeclareMathOperator{\tp}{tp}
\newcommand{\Lc}{\mathcal{L}}
\newcommand{\Sc}{\mathcal{S}}
\newcommand{\Sbf}{\mathbf{S}}
\newcommand{\ol}{\overline}
\def\Ind{\setbox0=\hbox{$x$}\kern\wd0\hbox to 0pt{\hss$\mid$\hss}
\lower.9\ht0\hbox to 0pt{\hss$\smile$\hss}\kern\wd0}
\def\Notind{\setbox0=\hbox{$x$}\kern\wd0\hbox to 0pt{\mathchardef
\nn=12854\hss$\nn$\kern1.4\wd0\hss}\hbox to
0pt{\hss$\mid$\hss}\lower.9\ht0 \hbox to 0pt{\hss$\smile$\hss}\kern\wd0}
\def\ind{\mathop{\mathpalette\Ind{}}}
\newcommand{\indwo}{\ind^{\!\!\textnormal{wo}}}
\begin{document}

\title{Uniqueness of constructible models in continuous logic}
\author{James E. Hanson}
\address{Department of Mathematics \\
  Iowa State University \\
  396 Carver Hall \\
  411 Morrill Road \\
  Ames, IA 50011, USA}
\email{jameseh@iastate.edu}
\date{\today}

\keywords{continuous logic, constructible models}
\subjclass[2020]{03C66}

\begin{abstract}
  We show that constructible models of arbitrary complete continuous first-order theories are unique up to isomorphism.
\end{abstract}

\maketitle

\section*{Introduction}

A classic result of early model theory is Vaught's theorem that for a countable complete theory $T$, the following are equivalent:
\begin{itemize}
\item $T$ has a prime model.
\item $T$ has an atomic model.
\item For every $n$, atomic types are dense in $S_n(T)$.
\end{itemize}
Moreover, a countable model is prime if and only if it is atomic, any two prime models are isomorphic, and every prime model is $\aleph_0$-homogeneous \cite{Vaught1959}.

In the context of uncountable theories, the story becomes far more complicated. Primeness and atomicity are no longer equivalent, and Shelah produced several examples of theories that have non-unique prime models \cite{Shelah1979}. Earlier positive results regarding uncountable theories were specifically in the context of $\aleph_0$\nobreakdash-\hspace{0pt}stable theories \cite{Shelah1972}. In an unpublished result, Ressayre was able to show uniqueness of a special kind of prime model without any assumptions regarding the theory, and the technique of this proof was extended greatly in stability theory. (See \cite[Sec.~X.3]{Baldwin1987} for a discussion of the history of these results.)

\begin{defn}\label{defn:construction-sequence}
  A sequence $(b_i)_{i < \alpha}$ of elements is a \emph{construction sequence over $A$} if for each $i$, $\tp(b_i/Ab_{<i})$ is atomic. A model $M\supseteq A$ is \emph{constructible over $A$} if it can be enumerated by a construction sequence over $A$. $M$ is \emph{constructible} if it is constructible over $\varnothing$.
\end{defn}

\begin{thm}[Ressayre]\label{thm:Ressayre}
  Any two constructible models of a complete theory are isomorphic.
\end{thm}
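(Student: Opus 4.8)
The plan is to prove the relative statement that any two models constructible over a common set $A$ are isomorphic over $A$ (the theorem is the case $A = \varnothing$) by running a transfinite back-and-forth between two constructions, matching them on small ``closed'' pieces. Throughout I will rely on the following basic behavior of principal (atomic) types, which I would establish first in the continuous setting: a principal type over a set is realized in every model containing that set (so in particular constructible models are prime); principality is preserved under restricting the parameters of the isolating data to a subset that already contains them; and principality composes, so that a countable tuple enumerated by a construction has a principal type, and conversely a principal type of a countable tuple can be re-enumerated as a construction, its successive conditional types remaining principal. The one genuinely continuous wrinkle here is that isolation is an $\varepsilon$-approximate notion: each $\varepsilon$ requires only countably many parameters, so the natural \emph{support} of a principal type is countable rather than finite, and every lemma must be proved uniformly in $\varepsilon$.

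Next I would set up the combinatorial core. Fix a construction $(b_i)_{i<\alpha}$ of $M$ over $A$ together with, for each $i$, a countable support $B_i \subseteq A \cup \{b_j : j<i\}$ carrying the $\varepsilon$-isolating data of $\tp(b_i/Ab_{<i})$ for all $\varepsilon$. Call $W \subseteq \alpha$ \emph{closed} if $i \in W$ implies $B_i \subseteq A \cup \{b_j : j \in W,\ j<i\}$. Closed sets form a closure system in which the closure of a countable set is countable. I would then prove two facts: if $W$ is closed then the restricted enumeration is itself a construction, so $\{b_i : i \in W\}$ is constructible over $A$ and $\tp(\{b_i : i\in W\}/A)$ is principal; and the complementary enumeration is a construction over $A \cup \{b_i : i\in W\}$, so $M$ is constructible over that enlarged base.

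The heart of the argument is a preservation lemma: if $N$ is constructible over $C$ and $\tp(E/C)$ is principal, then $N$ is constructible over $CE$. I would prove this by closing off a countable piece $P$ of $N$'s construction with $E \subseteq C \cup P$; then $\tp(P/C)$ is principal and $N$ is constructible over $CP = CEP$. Substituting the actual tuple $E$ into the isolating data for $\tp(P/C)$ shows that $\tp(P/CE)$ is principal, hence re-enumerable as a construction over $CE$; prepending that construction to the given one exhibits $N$ as constructible over $CE$. Granting this, the back-and-forth maintains an elementary bijection $f : A\cup C_1 \to A \cup C_2$ with $C_1 \subseteq M_1$ and $C_2 \subseteq M_2$ countable, $\tp(C_k/A)$ principal, and $M_k$ constructible over $A \cup C_k$. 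To absorb a new $a \in M_1$, close off $C_1 \cup \{a\}$, push the resulting principal conditional type forward through $f$, realize it in $M_2$ (it is principal over $A\cup C_2$, hence realized there), and apply the preservation lemma to restore the invariant on the $M_2$ side; symmetrizing and taking unions over a chain yields the isomorphism.

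I expect the main obstacle to be the continuous-logic versions of the isolation lemmas feeding into the preservation lemma—specifically, that principality survives projection, conditioning, restriction of parameters, and substitution of a realized tuple, all proved uniformly in $\varepsilon$ while tracking countable supports. Once these approximate lemmas are secured, the closure system and the back-and-forth should be routine.
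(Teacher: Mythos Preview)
Your overall architecture---closed subsets of a construction, a preservation lemma, and a transfinite back-and-forth over countable closed pieces---matches the paper's. The genuine gap is your repeated appeal to the \emph{converse} decomposition of atomicity: you assert that ``a principal type of a countable tuple can be re-enumerated as a construction, its successive conditional types remaining principal,'' and in the preservation lemma you pass from ``$\tp(P/CE)$ is principal'' to ``hence re-enumerable as a construction over $CE$.'' This converse is precisely what fails in continuous logic. The paper gives the explicit counterexample of the circle equipped with both the discrete and the path metric: $\tp(ab)$ is atomic but $\tp(a/b)$ is not. No amount of $\varepsilon$-uniformity or countable-support bookkeeping repairs this; the conditional type simply fails to be isolated. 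Since your preservation lemma and your back-and-forth step both rest on this re-enumeration, the argument does not go through as written.

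The paper's workaround is to avoid re-enumeration entirely. It records, alongside each $b_i$, the specific distance predicate $\varphi_i$ together with a countable sublanguage $\Sc_i$ in which $\varphi_i$ is definable (the ``augmented'' construction data). The substitute for your preservation lemma is a \emph{shuffle} proposition: if $(A',X)$ is construction-closed, then the \emph{same} sequence restricted to $\alpha\setminus X$, with the \emph{same} $\varphi_i$'s, is still an augmented construction over $Ab_{\in X}$. This is proved not by decomposing an atomic type but via weak orthogonality: from the $C_i$-definability of $\varphi_i$ one gets $b_i \indwo_{Ab_{<i}} b_{\in X,\,>i}$, whence $\varphi_i$ remains the distance predicate over the enlarged base. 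The extension step of the back-and-forth then uses ``self-sufficient'' countable pairs---closed pieces that are dense in an elementary substructure for the countable language $\Sc_X$---so that the two pieces become separable atomic models of the same countable theory and are matched by the continuous Vaught theorem; weak orthogonality again is what lets the new countable isomorphism be glued to the one already built. Your proposal lacks a replacement for either the weak-orthogonality shuffle or the sublanguage/self-sufficiency device, and one of them is needed to get past the failure of conditional atomicity.
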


In the context of continuous logic, Vaught's result is known to generalize \cite[Cor.~12.9]{MTFMS}, but the proof requires some modification and prime models are only guaranteed to be `approximately $\aleph_0$-homogeneous,' rather than $\aleph_0$-homogeneous. The modified proof uses the kind of back-and-forth-with-error argument that is common in continuous logic but also seems to rely on working with constructions of length $\omega$. This can sometimes cause difficulty when trying to generalize such arguments to constructions of uncountable length, such as the back-and-forth argument in the proof of \cref{thm:Ressayre}.

Another difficulty of generalizing certain results to continuous logic is the following phenomenon: In discrete logic, one can show that for any $a$ and $b$, $\tp(ab)$ is atomic if and only if $\tp(b)$ and $\tp(a/b)$ are atomic, and this fact is typically used in proofs of  \cref{thm:Ressayre}. In continuous logic, we still have that if $\tp(b)$ and $\tp(a/b)$ are atomic, then $\tp(ab)$ is atomic, but the converse does not in general need to hold. For a simple example, consider a two-sorted structure $(A,B,f)$ such that $A$ is the circle $S^1$ with the $\{0,1\}$-valued discrete metric, $B$ is the circle with its ordinary path metric, and $f: A \to B$ is the identity map. For any $a \in A$ and $b = f(a)$, we have $\tp(ab)$ and $\tp(a)$ are atomic, but $\tp(a/b)$ is not. This phenomenon is the root of many failures of continuous generalizations classical results, such as Vaught's never-two theorem and Lachlan's theorem on the number of countable models of a superstable theory, which was investigated extensively in \cite{Yaacov2007} using the notion of $d$-finiteness of types.

These difficulties might lead one to suspect that \cref{thm:Ressayre} does not generalize to continuous logic, but, as evidenced by the wide array of extensions of Ressayre's technique in stability theory, the proof is robust enough to be salvageable.

We would like to thank Ita\"i Ben Yaacov for some valuable discussion regarding this problem.

\section{Augmented construction sequences}

Note that while we originally stated \cref{defn:construction-sequence} for discrete logic, the definition makes sense verbatim in continuous logic. Recall that a type $\tp(b/A)$ (with $b$ a finite tuple\footnote{Note that the elements of a construction sequence are required to be singletons.}) is \emph{atomic} if there is an $A$-definable predicate $D(x)$ such that for any $c$ in the monster, $D(c) = \inf\{d(c,b') : b'\equiv_A b\}$. As discussed in the introduction, it is possible to show that if $\tp(bc/A)$ is atomic, then $\tp(b/A)$ is atomic. %

\begin{lem}\label{lem:construction-embedding}
  For any set of parameters $A$, construction sequence $(b_i)_{i <\alpha}$ over $A$, and model $M \supseteq A$, there is an elementary map $f: Ab_{<\alpha} \to M$ that fixes $A$ pointwise. %
\end{lem}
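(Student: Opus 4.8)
The plan is to construct $f$ by transfinite recursion on $\alpha$, building an increasing chain $(f_i)_{i \le \alpha}$ of elementary maps $f_i \colon Ab_{<i} \to M$, each fixing $A$ pointwise, and taking $f = f_\alpha$. I would start with $f_0 = \mathrm{id}_A$, which is elementary since $A \subseteq M$. At a limit ordinal $\lambda$ I set $f_\lambda = \bigcup_{i < \lambda} f_i$; because elementarity of a partial map is a condition on finite tuples, a union of an increasing chain of elementary maps is again elementary, so this step is immediate.

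The substantive work is at successor stages. Suppose $f_i \colon Ab_{<i} \to M$ is elementary. Let $p = \tp(b_i/Ab_{<i})$, which is atomic by hypothesis, and let $D(x)$ be the $Ab_{<i}$-definable predicate witnessing this, so that $D(c) = \inf\{d(c,b') : b' \equiv_{Ab_{<i}} b_i\}$ for all $c$ in $\M$. Applying $f_i$ to the parameters of $D$ yields an $f_i(Ab_{<i})$-definable predicate $D^{f_i}(x)$ over $M$, and this witnesses that the pushforward type $q := f_i(p)$ is atomic. Granting a realization $c \in M$ of $q$, I extend by $f_{i+1}(b_i) = c$; since $c$ realizes $f_i(p)$, the extended map agrees in value with $f_i$ on every formula with one free slot for $b_i$, and together with elementarity of $f_i$ this makes $f_{i+1}$ elementary.

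The main obstacle is thus the realization step: an atomic type $q$ over a parameter set $B \subseteq M$ is realized in the model $M$. This is automatic in discrete logic but genuinely uses completeness of the metric in the continuous setting. The argument I would give is as follows. Since $q$ is consistent, its distance predicate satisfies $\inf_x D^{f_i}(x) = 0$ in $\M$ (witnessed by any realization of $q$), so by $B \subseteq M \preceq \M$ and elementarity this infimum is also $0$ when computed in $M$. Hence there are elements of $M$ on which $D^{f_i}$ is arbitrarily small. Using the definability of $D^{f_i}$ together with the triangle inequality, I would choose inductively a sequence $(c_n)$ in $M$ with $D^{f_i}(c_n) \to 0$ and with $d(c_n, c_{n+1})$ small enough to be summable, so that $(c_n)$ is Cauchy; its limit $c \in M$ exists by completeness and satisfies $D^{f_i}(c) = 0$ by continuity. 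Since the set of realizations of a complete type is closed, $D^{f_i}(c) = 0$ forces $c$ to realize $q$. Making the inductive choice of the $c_n$ precise---arranging both that the distance predicate decreases and that consecutive terms stay close, using only first-order conditions available in $M$---is the one delicate point; everything else is bookkeeping in the recursion.
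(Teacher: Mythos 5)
Your proposal is correct and takes exactly the approach of the paper's proof, which simply says ``atomic types must always be realized in models'' (a routine argument) and then invokes transfinite induction; your successor-stage realization argument---transferring $\inf_x D^{f_i}(x)=0$ to $M$ by elementarity and building a Cauchy sequence of better and better approximate realizations using the first-order ``almost-near'' property of distance predicates---is precisely that routine argument spelled out. The one step you flag as delicate is indeed standard (it is the characterization of distance predicates by sup/inf conditions, as in the definability results of Ben Yaacov--Berenstein--Henson--Usvyatsov), so there is no gap.
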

\begin{proof}
  By a routine argument, atomic types must always be realized in models. The lemma then follows by transfinite induction.
\end{proof}

\begin{defn}
  Given a continuous first-order theory $T$ in a language $\Lc$ and a set of parameters $A$, an \emph{augmented construction sequence over $A$} is a sequence $(b_i,\varphi_i,C_i,\Sc_i)_{i < \alpha}$ such that $(b_i)_{i<\alpha}$ is a construction sequence over $A$ and for each $i<\alpha$, $\varphi_i$ is the distance predicate of $\tp(b_i/Ab_{<i})$, $C_i \subseteq Ab_{<i}$ is a countable set of parameters $\Sc_i\subseteq \Lc$ is a countable language such that $\varphi_i$ is definable in $\Sc_i$ over $C_i$.

\end{defn}

We will also sometimes refer to augmented construction sequences indexed by a subset of an ordinal (i.e., $(b_i,\varphi_i,C_i,\Sc_i)_{i \in X}$ for $X \subseteq \alpha$) with essentially the same definition. Most of our lemma will only be stated and proven in ordinal-indexed case for the sake of notational simplicity, but there is no subtlety in generalizing these statements to sequences indexed by subsets of ordinals, since any subset of an ordinal is canonically order-isomorphic to an ordinal.

Note that it is immediate that any construction sequence can be extended to an augmented construction sequence. Note, moreover, that $\varphi_i$ is uniquely determined (up to logical equivalence) by $\tp(b_i/Ab_{<i})$, but $C_i$ and $\Sc_i$ are not. We have put the $\varphi_i$'s in explicitly as a bookkeeping device that will be useful later when we show that under certain circumstances, $(b_i,\varphi_i,C_i,\Sc_i)_{i \in X}$ is still an augmented construction sequence (possibly over a different set of parameters) for some $X \subseteq \alpha$. Here it could in principle be the case that the same $C_i$'s and $\Sc_i$'s witness the construction but with different distance predicates, but it will be important that the distance predicates do not actually change. That said, we will also frequently write $(b_i,\varphi_i,C_i,\Sc_i)_{i <\alpha}$ as $(b_i,C_i,\Sc_i)_{i < \alpha}$ when we don't need to emphasize that we are keeping track of specific distance predicates.

\begin{defn}
  For any augmented construction sequence $\Sbf = (b_i,C_i,\Sc_i)_{i < \alpha}$ over $A$, we say that a pair $(A',X)$ with $A' \subseteq A$ and $X \subseteq \alpha$ is \emph{construction-closed in $\Sbf$} if for every $i \in X$, $C_i \subseteq A' \cup \{b_i : i \in X\}$. We say that $(A',X)$ is \emph{countable} if $|A'|+|X| \leq \aleph_0$.
\end{defn}

\begin{lem}\label{lem:const-closed-restrict}
  For any augmented construction sequence $\Sc = (b_i,C_i,\Sc_i)_{i < \alpha}$ and any construction-closed pair $(A',X)$ in $\Sc$, $\Sc\res X$ is an augmented construction sequence over $A'$.
\end{lem}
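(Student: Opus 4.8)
The plan is to verify directly that $\Sc\res X = (b_i,\varphi_i,C_i,\Sc_i)_{i \in X}$ satisfies every clause of the definition of an augmented construction sequence over $A'$. Writing $b^X_{<i} := \{b_j : j \in X,\ j < i\}$ for the set of earlier elements within $X$, the clauses concerning $C_i$ and $\Sc_i$ come nearly for free: each $\Sc_i$ is unchanged and still countable, and construction-closedness of $(A',X)$ together with the original containment $C_i \subseteq Ab_{<i}$ gives $C_i \subseteq A' \cup b^X_{<i}$, so $C_i$ is a countable subset of the new parameter set and $\varphi_i$ is still definable in $\Sc_i$ over $C_i$. The real content is to show that for each $i \in X$ the recorded predicate $\varphi_i$ is the distance predicate of $\tp(b_i/A'b^X_{<i})$; this simultaneously certifies that $(b_i)_{i \in X}$ is a construction sequence over $A'$ (the type is atomic, witnessed by $\varphi_i$) and that the bookkeeping datum $\varphi_i$ is genuinely unchanged, which is the point the authors emphasized in introducing the $\varphi_i$'s.

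To prove this equality of distance predicates, I would fix $i \in X$ and compare $\varphi_i$ against the candidate $\psi(x) := \inf\{d(x,b') : b' \equiv_{A'b^X_{<i}} b_i\}$. Since $A'b^X_{<i} \subseteq Ab_{<i}$, every realization of $\tp(b_i/Ab_{<i})$ realizes $\tp(b_i/A'b^X_{<i})$, so the defining infimum for $\psi$ ranges over a larger set and hence $\psi(c) \leq \varphi_i(c)$ for all $c$. For the reverse inequality I would use two standard properties of the distance predicate $\varphi_i$: it is $1$-Lipschitz in the sense that $\varphi_i(x) \leq d(x,y) + \varphi_i(y)$ (being an infimum of distance functions), and $\varphi_i(b_i) = 0$. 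Because $C_i \subseteq A'b^X_{<i}$, the predicate $\varphi_i$ is definable over $A'b^X_{<i}$ and therefore invariant under $A'b^X_{<i}$-equivalence; hence any $b' \equiv_{A'b^X_{<i}} b_i$ satisfies $\varphi_i(b') = \varphi_i(b_i) = 0$, and the Lipschitz bound gives $\varphi_i(c) \leq d(c,b') + \varphi_i(b') = d(c,b')$. Taking the infimum over such $b'$ yields $\varphi_i(c) \leq \psi(c)$, and combining the two inequalities gives $\varphi_i = \psi$.

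The step I expect to be the crux is this reverse inequality, and more precisely the observation that construction-closedness is exactly the hypothesis needed to keep $\varphi_i$ definable over the smaller parameter set $A'b^X_{<i}$: without it, $\varphi_i$ might fail to be invariant under $A'b^X_{<i}$-equivalence and the argument that $\varphi_i(b') = 0$ would break down. Everything else—countability of $C_i$ and $\Sc_i$, the trivial inequality $\psi \leq \varphi_i$, and the re-indexing of $X$ as an ordinal—is routine bookkeeping, so I would state those briefly and concentrate the write-up on the definability-plus-Lipschitz argument. One minor point worth addressing explicitly is that intersecting the construction-closedness containment $C_i \subseteq A' \cup \{b_j : j \in X\}$ with $C_i \subseteq Ab_{<i}$ does land $C_i$ inside $A' \cup b^X_{<i}$ rather than merely $A' \cup \{b_j : j \in X\}$, which is precisely what makes $\varphi_i$ available at stage $i$ of the restricted construction.
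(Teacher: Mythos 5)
Your proof is correct and takes essentially the same route as the paper, which simply declares the lemma ``immediate from the relevant definitions''; your write-up is exactly the direct verification of those definitions, with the containment $C_i \subseteq A' \cup \{b_j : j \in X,\ j < i\}$ and the invariance-plus-Lipschitz argument showing that $\varphi_i$ remains the distance predicate of the restricted type. The crux you identify---that definability of $\varphi_i$ over $C_i \subseteq A'b^X_{<i}$ forces the realization sets, and hence the distance predicates, of $\tp(b_i/Ab_{<i})$ and $\tp(b_i/A'b^X_{<i})$ to coincide---is the same mechanism the paper uses explicitly in \cref{lem:can't-believe-it's-not} part~\ref{is-atomic} and in \cref{lem:asc-chain-ok}.
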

\begin{proof}
  This is immediate from the relevant definitions.
\end{proof}

\begin{defn}
  $b$ and $c$ are \emph{weakly orthogonal over $A$} if for any $b' \equiv_A b$ and $c' \equiv_A c$, $b'c'\equiv_A bc$. We write $b \indwo_A c$ to denote that $b$ and $c$ are weakly orthogonal over $A$.
\end{defn}

Note that $\indwo$ is clearly symmetric by definition.

The following lemma is what we will use in lieu of the standard fact in discrete logic that $\tp(ab)$ is atomic if and only if $\tp(b)$ is atomic and $\tp(a/b)$ is atomic. For any set $A$ in a metric space, we will write $\ol{A}$ for the metric closure of $A$.

\begin{lem}\label{lem:can't-believe-it's-not}
  Fix a set $A$.
  \begin{enumerate}
  \item\label{wo-closedness} For any $B$ and $C$, $B \indwo_A C$ if and only if $\ol{B} \indwo_{\ol{A}}\ol{C}$.
  \item\label{entails-0} For any $B$ and $C$, the following are equivalent.
    \begin{enumerate}
    \item\label{wo} $B \indwo_A C$
    \item\label{forward} $\tp(C/A) \vdash \tp(C/AB)$
    \item\label{backward} $\tp(B/A)\vdash \tp(B/AC)$
    \end{enumerate}
  \item\label{wo-trans} For any $B$, $C$, and $D$, if $D \indwo_A B$ and $D \indwo_{AB}C$, then $D \indwo_A BC$.
  \item\label{atomic-implies-wo} For any finite tuple $b$ and $C$, if $\tp(b/AC)$ is atomic with an $A$-definable distance predicate, then $b \indwo_A C$.
  \item\label{entails-1} For any $B$ and sequence $(c_i)_{i<\alpha}$, if $c_i \indwo_{Ac_{<i}}B$ for each $i<\alpha$, then $c_{<\alpha}\indwo_A B$.
  \item\label{is-atomic} For any finite tuple $b$ and $C$, if $b \indwo_A C$ and $\tp(b/A)$ is atomic, then $\tp(b/AC)$ is atomic with an $A$-definable distance predicate.
  \end{enumerate}
\end{lem}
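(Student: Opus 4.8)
The plan is to make part~\ref{entails-0} the hub of the whole argument, since it recasts weak orthogonality as an entailment between complete types, and then to read off the remaining parts from it; accordingly I would prove \ref{entails-0} first, directly in the monster. The implications \ref{wo}$\Rightarrow$\ref{forward} and \ref{wo}$\Rightarrow$\ref{backward} are immediate (put $B'=B$, resp.\ $C'=C$, in the definition of $\indwo$), and, recalling that for complete types $\tp(C/A)\vdash\tp(C/AB)$ just means $C'\equiv_A C\Rightarrow C'\equiv_{AB}C$, the only content is the converse \ref{forward}$\Rightarrow$\ref{wo}. Here, given $B'\equiv_A B$ and $C'\equiv_A C$, I would choose an automorphism $\sigma$ of the monster fixing $A$ pointwise with $\sigma B=B'$ and set $C''=\sigma C$, so that $B'C''\equiv_A BC$ for free; the goal $B'C'\equiv_A BC$ then reduces to $C'\equiv_{AB'}C''$, and transporting this back along $\sigma^{-1}$ turns it into $\sigma^{-1}C'\equiv_{AB}\sigma^{-1}C''$, which holds because \ref{forward} forces both $\sigma^{-1}C'$ and $\sigma^{-1}C''$ (each $\equiv_A C$) to be $\equiv_{AB}C$. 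Since $\indwo$ is symmetric by definition, \ref{wo}$\Leftrightarrow$\ref{backward} comes for free.

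Granting \ref{entails-0}, parts \ref{wo-trans}, \ref{entails-1}, and \ref{is-atomic} are short. For \ref{wo-trans} I would rewrite the hypotheses as $\tp(D/A)\vdash\tp(D/AB)$ and $\tp(D/AB)\vdash\tp(D/ABC)$ and compose them, transitivity of $\vdash$ giving $\tp(D/A)\vdash\tp(D/ABC)$, i.e.\ $D\indwo_A BC$. Part \ref{entails-1} is a transfinite induction showing $c_{<j}\indwo_A B$ for all $j\le\alpha$: the successor step is \ref{wo-trans} with $D=B$, fed the relations $B\indwo_A c_{<j}$ and $B\indwo_{Ac_{<j}}c_j$ (obtained from the inductive hypothesis and the assumption via symmetry of $\indwo$), while at a limit $j$ I would appeal to finite character, since any condition in $\tp(B/Ac_{<j})$ mentions only finitely many $c_i$ and hence already lies in some $\tp(B/Ac_{<k})$ with $k<j$, so the entailments $\tp(B/A)\vdash\tp(B/Ac_{<k})$ accumulate to $\tp(B/A)\vdash\tp(B/Ac_{<j})$. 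For \ref{is-atomic}, \ref{entails-0} gives $\{b':b'\equiv_A b\}=\{b':b'\equiv_{AC}b\}$, so the distance predicate of $\tp(b/A)$ simultaneously computes $\inf\{d(x,b'):b'\equiv_{AC}b\}$; being $A$-definable it is a fortiori $AC$-definable, which is exactly the statement that $\tp(b/AC)$ is atomic with an $A$-definable distance predicate.

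Part \ref{atomic-implies-wo} I would argue directly: if $D$ is an $A$-definable distance predicate for $\tp(b/AC)$, then $D(b)=0$, and for any $b'\equiv_A b$ we get $D(b')=D(b)=0$ because $A$-definable predicates are constant on $\equiv_A$-classes; as the realization set $\{u:u\equiv_{AC}b\}$ is metrically closed (complete types are closed under metric limits by continuity of formulas), $D(b')=0$ forces $b'\equiv_{AC}b$, establishing \ref{backward} and hence $b\indwo_A C$. The one part I expect to require genuine care, and thus the main obstacle, is \ref{wo-closedness}, because it hinges on two continuous-logic facts: (i) $\equiv_A$ and $\equiv_{\ol A}$ are the same relation on tuples, since every parameter of $\ol A$ is a uniform limit of parameters from $A$ and formulas are uniformly continuous in their parameters; and (ii) the realizations of $\tp(\ol B/\ol A)$ are precisely the metric closures of realizations of $\tp(B/A)$, the new coordinates being forced as limits and metric completeness being type-definable. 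With these in hand, together with $\overline{AC}=\ol A\,\ol C$, I would run the entailment characterization of \ref{entails-0} through closures: $B\indwo_A C$ iff $[B'\equiv_A B\Rightarrow B'\equiv_{AC}B]$ iff $[\ol{B'}\equiv_{\ol A}\ol B\Rightarrow\ol{B'}\equiv_{\ol A\,\ol C}\ol B]$ iff $\ol B\indwo_{\ol A}\ol C$. Setting up matching enumerations of the closures and checking that every realization of $\tp(\ol B/\ol A)$ really is such a closure is the delicate point; the limit stage of \ref{entails-1} is a secondary spot where finite character must be invoked explicitly.
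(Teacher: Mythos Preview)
Your proposal is correct and follows essentially the same route as the paper: part~\ref{entails-0} is the hub, parts~\ref{wo-trans}, \ref{atomic-implies-wo}, \ref{entails-1}, and \ref{is-atomic} are read off from it exactly as you describe (the paper's successor step in \ref{entails-1} just unpacks your appeal to \ref{wo-trans} via three applications of \ref{entails-0}), and the limit case is the same finite-character observation. The only real difference is that you flag \ref{wo-closedness} as the delicate point and route it through \ref{entails-0}, whereas the paper dispatches it in one line directly from the definition using only the fact that $B\equiv_A B'$ iff $\ol B\equiv_{\ol A}\ol{B'}$ (with compatible enumerations); your extra care there is not wrong, just unnecessary.
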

\begin{proof}
  \ref{wo-closedness} follows immediately from the definition of $\indwo$ and the fact that $B \equiv_A B'$ if and only if $\ol{B} \equiv_{\ol{A}} \ol{B'}$ (where we choose compatible enumerations of $\ol{B}$ and $\ol{B'}$).

  For \ref{entails-0}, by symmetry we just need to show that \ref{wo} and \ref{forward} are equivalent. \ref{forward} is equivalent to the following: For any $C' \equiv_A C$, $C' \equiv_{AB}C$. Since $C' \equiv_{AB}C$ is equivalent to $BC' \equiv_A BC$, we clearly have that \ref{wo} implies \ref{forward}. Now assume \ref{forward} and fix $B' \equiv_A B$ and $C' \equiv_A C$. Fix an automorphism $\sigma$ (fixing $A$ pointwise) such that $\sigma(B') = B$. We now have that $\sigma(C') \equiv_A C$, whereby $(B,\sigma(C')) \equiv_A BC$. Applying $\sigma^{-1}$ then gives $B'C' \equiv_A BC$.

  For \ref{wo-trans}, note that if $\tp(D/A) \vdash \tp(D/AB)$ and $\tp(D/AB)\vdash \tp(D/ABC)$, then $\tp(D/A)\vdash \tp(D/ABC)$. The result now follows from \ref{entails-0}.
  
  For \ref{atomic-implies-wo}, the given condition clearly implies that $\tp(b/A)\vdash \tp(b/AC)$, so by \ref{entails-0}, we have that $b \indwo_A C$.
  
  For \ref{entails-1}, we will prove by induction that for each $i < \alpha$, $\tp(c_{<i}/A)\vdash \tp(c_{<i}/AB)$. Assume that we have shown this for all $j < i$. If $i$ is a limit ordinal or $0$, we immediately have that $\tp(c_{<i}/A)\vdash \tp(c_{<i}/AB)$. So let $i = k+1$. We have by the induction hypothesis that $\tp(c_{<k}/A)\vdash \tp(c_{<k}/AB)$. Therefore by \ref{entails-0}, we have that $\tp(B/A)\vdash \tp(B/Ac_{<k})$. By assumption, $\tp(c_k/Ac_{<k}) \vdash \tp(c_k/ABc_{<k})$, so by \ref{entails-0} again, we have that $\tp(B/Ac_{<k})\vdash \tp(B/Ac_{< k+1})$, whereby $\tp(B/A) \vdash \tp(B/Ac_{<k+1})$. By \ref{entails-0} a third time, $\tp(c_{<k+1}/A)\vdash \tp(c_{<k+1}/AB)$.

  For \ref{is-atomic}, we have that the set of realizations (in the monster) of $\tp(b/A)$ is $A$-definable and we have that the set of realizations of $\tp(b/A)$ is the same as the set of realizations (in the monster) of $\tp(b/AC)$. Therefore $\tp(b/AC)$ is atomic and its distance predicate is $A$-definable. %
\end{proof}

For any $X \subseteq \alpha$, we will write $\Sbf\res X$ for the restricted sequence $(b_i,\varphi_i,C_i,\Sc_i)_{i \in X}$. We will also abbreviate $A' \cup \{b_i : i \in X\}$ as $A'b_{\in X}$ and $\bigcup_{i \in X}\Sc_i$ as $\Sc_X$.

\begin{prop}\label{prop:construction-shuffle}
  \begin{sloppypar}
    Fix an augmented construction sequence $\Sbf = (b_i,\varphi_i,C_i,\Sc_i)_{i < \alpha}$ over $A$. For any construction-closed pair $(A',X)$ in $\Sbf$, $\Sbf \res (\alpha \setminus X)$ is also an augmented construction sequence over $Ab_{\in X} = A\cup \{b_i : b \in X\}$.
  \end{sloppypar}
\end{prop}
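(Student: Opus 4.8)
The plan is to verify the defining conditions of an augmented construction sequence for $\Sbf\res(\alpha\setminus X)$ over $Ab_{\in X}$, re-indexing $\alpha\setminus X$ by its increasing enumeration. Fix $i\in\alpha\setminus X$ and write $E_i=\{b_j:j\in X,\ j\geq i\}$ and $D_i = Ab_{\in X}\cup\{b_j:j\in\alpha\setminus X,\ j<i\}$; unwinding the definitions gives $D_i = Ab_{<i}\cup E_i$. The bookkeeping data requires almost no work: since $C_i\subseteq Ab_{<i}\subseteq D_i$, the set $C_i$ still sits below $b_i$ in the new sequence, and $\varphi_i$ is of course still definable in $\Sc_i$ over $C_i$. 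The entire content of the proposition is thus the single statement that for each such $i$, $\tp(b_i/D_i)$ is atomic with distance predicate \emph{exactly} $\varphi_i$ (it is essential, per the remark following the definition, that the distance predicate be unchanged).

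To obtain this I would reduce to a weak-orthogonality statement and apply part~\ref{is-atomic}. Since $\tp(b_i/Ab_{<i})$ is atomic with distance predicate $\varphi_i$, part~\ref{is-atomic} shows it suffices to prove $b_i\indwo_{Ab_{<i}}E_i$: that part yields that $\tp(b_i/Ab_{<i}E_i)=\tp(b_i/D_i)$ is atomic, and, crucially, its proof shows the set of realizations is unchanged, so the distance predicate remains $\varphi_i$. To establish $b_i\indwo_{Ab_{<i}}E_i$, I would enumerate $E_i$ in increasing order of index as $(b_{j(k)})_k$ and apply part~\ref{entails-1} together with the symmetry of $\indwo$, which reduces the task to showing, for each $j\in X$ with $j>i$, that $b_j\indwo_{P_j}b_i$, where $P_j = Ab_{<i}\cup\{b_{j'}:j'\in X,\ i<j'<j\}$ is the base accumulated so far.

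For a fixed such $j$ I would invoke part~\ref{atomic-implies-wo}: it is enough to see that $\tp(b_j/P_jb_i)$ is atomic with a $P_j$-definable distance predicate. Here the construction-closed hypothesis does the real work: it gives $C_j\subseteq A'b_{\in X}$, and intersecting with the fact $C_j\subseteq Ab_{<j}$ yields $C_j\subseteq A'\cup\{b_k:k\in X,\ k<j\}\subseteq P_j$, so $\varphi_j$ is $P_j$-definable. Since $C_j\subseteq P_j\subseteq P_jb_i\subseteq Ab_{<j}$ and $\tp(b_j/Ab_{<j})$ is atomic with distance predicate $\varphi_j$, restricting the base down to $P_jb_i$ leaves the type atomic with the same predicate $\varphi_j$: being $P_j$-definable, $\varphi_j$ cuts out the same closed realization set over either base, which is the observation already implicit in \cref{lem:const-closed-restrict}. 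Part~\ref{atomic-implies-wo} then gives $b_j\indwo_{P_j}b_i$, completing the induction.

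The main obstacle is the interaction between the base-dependence of $\indwo$ and the bookkeeping. Weak orthogonality is monotone in the side parameter but not in the base, so one cannot simply enlarge or shrink bases at will; the argument must accumulate exactly the base $P_j$ through part~\ref{entails-1}, and it is precisely the construction-closedness of $(A',X)$ that keeps each $C_j$ inside $P_j$, which is what lets the distance predicates $\varphi_j$ descend unchanged. The one auxiliary point that must be recorded carefully, though it is routine from the definition of atomic as in \cref{lem:const-closed-restrict}, is that an atomic type whose distance predicate is definable over a subset of its base remains atomic, with the same distance predicate, over that subset.
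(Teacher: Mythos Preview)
Your proof is correct and follows essentially the same route as the paper's: enumerate $E_i$ (the paper's $e_{<\beta}$), use part~\ref{atomic-implies-wo} at each step together with construction-closedness to get the per-step weak orthogonality, combine via part~\ref{entails-1} and symmetry to obtain $b_i\indwo_{Ab_{<i}}E_i$, and finish with part~\ref{is-atomic}. If anything, you are more explicit than the paper about where construction-closedness enters---the paper compresses your verification that $C_j\subseteq P_j$ into the phrase ``by construction.''
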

\begin{proof}
  For any $i < \alpha$, let $B_i = \{b_j : j \in X \vee j < i\}$. We need to argue that for any $i < \alpha$ with $i \notin X$, $\tp(b_i/AB_i)$ is atomic and moreover its distance predicate is $\Sc_i$-definable over $C_i$. We will argue that the distance predicate of $\tp(b_i/AB_i)$ is the same as the distance predicate of $\tp(b_i/Ab_{<i})$.

  \begin{sloppypar}
    Fix $i < \alpha$ with $i \notin X$. Let $Y = \{j \in X: j < i\}$ and $Z = \{j \in X : j > i\}$. (Note that $X = Y \cup Z$.) Let $\beta$ be the order type of $Z$ and let $(e_j)_{j<\beta}$ be an enumeration of $(b_i)_{i \in Z}$ in order. We have by construction that $\tp(e_j/Ab_{< i}b_ie_{<j})$ is atomic with an $Ab_{<i}e_{<j}$-definable distance predicate. Therefore $e_j \indwo_{Ab_{<i}e_{<j}}b_i$ and by \cref{lem:can't-believe-it's-not} part \ref{entails-1}, we have that $e_{<\beta} \indwo_{Ab_{<i}}b_i$. By symmetry, $b_i \indwo_{Ab_{<i}}e_{<\beta}$. Since $\tp(b_i/Ab_{<i})$ is also atomic, we now have by \cref{lem:can't-believe-it's-not} part \ref{is-atomic} that $\tp(b_i/Ab_{<i}e_{<\beta})$ is atomic with the same distance predicate. Note that $Ab_{<i}e_{<\beta}$ is the same set as $A\cup b_{\in X}\cup \{b_j: j < i \wedge j \notin X\}$. Therefore we have that $\tp(b_i/A\cup b_{\in X}\cup \{b_j: j < i \wedge j \notin X\})$ is atomic with the same distance predicate as $\tp(b_i/Ab_{<i})$, namely $\varphi_i$.
  \end{sloppypar}
Since we can do this for any $i < \alpha$ with $i \notin X$, we have that $\Sbf\res (\alpha \setminus X)$ is an augmented construction sequence over $Ab_{\in X}$.
\end{proof}

We will also eventually need the following lemmas.

\begin{lem}\label{lem:const-seq-weak-orth}
  Fix an augmented construction sequence $\Sc = (b_i,\varphi_i,C_i,\Sc_i)_{i < \alpha}$ over $A$. For any construction-closed pair $(B,X)$, $b_{\in X}\indwo_B A$. 
\end{lem}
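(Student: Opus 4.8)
The plan is to reduce to singletons using part~\ref{entails-1} of \cref{lem:can't-believe-it's-not}, and to exploit that the restricted sequence is recorded with the \emph{same} distance predicates as the original. First I would apply \cref{lem:const-closed-restrict} to the construction-closed pair $(B,X)$ to conclude that $\Sbf\res X$ is an augmented construction sequence over $B$. Enumerating $b_{\in X}$ in increasing order of index as $(e_j)_{j<\beta}$, where $e_j = b_{\iota(j)}$ for the order isomorphism $\iota\colon\beta\to X$, it then suffices to show $e_j \indwo_{Be_{<j}} A$ for every $j<\beta$: since $B$ is the base and $(e_j)_{j<\beta}$ enumerates $b_{\in X}$, part~\ref{entails-1} of \cref{lem:can't-believe-it's-not} upgrades this to $b_{\in X}=e_{<\beta}\indwo_B A$, as desired.

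Fix $j<\beta$ and set $i=\iota(j)$. By hypothesis $\tp(b_i/Ab_{<i})$ is atomic with distance predicate $\varphi_i$. Because $\Sbf\res X$ is an augmented construction sequence over $B$ carrying the \emph{same} recorded distance predicates, $\tp(e_j/Be_{<j})$ is also atomic with distance predicate $\varphi_i$; here construction-closedness together with $C_i\subseteq Ab_{<i}$ forces $C_i\subseteq Be_{<j}$, so $\varphi_i$ is genuinely definable over $Be_{<j}$. The crucial point is that the set of realizations of an atomic type is exactly the zero set of its distance predicate, so $\tp(b_i/Ab_{<i})$ and $\tp(e_j/Be_{<j})$ have the \emph{same} realization set $\{x:\varphi_i(x)=0\}$. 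Since enlarging the parameter set only shrinks the realization set and $Ae_{<j}\subseteq Ab_{<i}$, this common set is contained in the realization set of $\tp(e_j/Ae_{<j})$; that is, $\tp(e_j/Be_{<j})\vdash\tp(e_j/Ae_{<j})$. As $B\subseteq A$ gives $Ae_{<j}=(Be_{<j})\cup A$, this entailment is exactly condition~\ref{backward} of part~\ref{entails-0} of \cref{lem:can't-believe-it's-not}, which is equivalent to $e_j\indwo_{Be_{<j}}A$. This completes the reduction.

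I expect the only real subtlety to be the bookkeeping that $\Sbf\res X$ carries the very same distance predicates $\varphi_i$ (precisely why they were tracked explicitly in the definition of an augmented construction sequence), together with the easy but essential observation that an atomic type and its distance predicate share the same zero set. Everything else is a direct assembly of parts~\ref{entails-0} and~\ref{entails-1} of \cref{lem:can't-believe-it's-not}.
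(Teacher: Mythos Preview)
Your proof is correct and follows essentially the same route as the paper's: enumerate $b_{\in X}$ in order, use the $Be_{<j}$-definability of $\varphi_{\iota(j)}$ to obtain $e_j \indwo_{Be_{<j}} A$, and then assemble these into $b_{\in X}\indwo_B A$. The only cosmetic differences are that you invoke part~\ref{entails-1} of \cref{lem:can't-believe-it's-not} directly whereas the paper re-runs that induction by hand, and you obtain the singleton step via part~\ref{entails-0} where the paper appeals to part~\ref{atomic-implies-wo}; your sandwiching $Be_{<j}\subseteq Ae_{<j}\subseteq Ab_{<i}$ in fact makes explicit exactly what the paper's ``by construction'' glosses over.
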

\begin{proof}
  Let $(e_j)_{j < \beta}$ be an enumeration of $b_{\in X}$ in order. We will prove the statement by induction on $j < \beta$. Assume that we have shown that $e_{<j} \indwo_B A$. By construction, $\tp(e_j/Ae_{<j})$ is atomic with a $Be_{<j}$-definable distance predicate. Therefore $e_j \indwo_{Be_{<j}}A$. Fix a tuple $f_{<j+1}$ and assume that $f_{<j+1}\equiv_B e_{<j+1}$. By the induction hypothesis, we have that $f_{<j} \equiv_A e_{<j}$. Fix an automorphism $\sigma$ of the monster fixing $A$ pointwise and taking $f_{<j}$ to $e_{<j}$. Note that $\sigma$ also fixes $B$ pointwise. We now have that $\sigma(f_{<j+1}) \equiv_{B} e_{<j+1}$. Since $\sigma(f_{<j}) = e_{<j}$, we have $\sigma(f_j) \equiv_{B e_{<j}}e_j$. Therefore $\sigma(f_j) \equiv_{Ae_{<j}}e_j$, and so $f_{<j+1}\equiv_A\sigma(f_{<j+1})\equiv_A e_{<j+1}$. Since we can do this for any such $f_{<j+1}$, we have that $\tp(e_{<j+1}/B)\vdash \tp(e_{<j+1}/A)$ and therefore $e_{<j+1}\indwo_B A$.

  Limit stages are immediate, so the statement in the lemma follows.
\end{proof}

\begin{lem}\label{lem:asc-chain-ok}
  Fix an augmented construction sequence $\Sc = (b_i,\varphi_i,C_i,\Sc_i)_{i < \alpha}$ over $A$. Let $(X_j)_{j < \beta}$ be an ascending sequence of subsets of $\alpha$ such that for each $j$, $(A,X_j)$ is construction-closed in $\Sc$ and $\Sc\res(\alpha \setminus X_j)$ is an augmented construction sequence over $Ab_{ \in X_j} = A \cup \{b_i : i \in X_j\}$. Then $\Sc \res (\alpha \setminus \bigcup_{j < \beta} X_j)$ is an augmented construction sequence over $A \cup \{b_i : i \in \bigcup_{j < \beta}X_j\}$.
\end{lem}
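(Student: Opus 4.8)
The plan is to reduce the statement to \cref{prop:construction-shuffle}. Write $X = \bigcup_{j<\beta} X_j$, so the assertion is exactly that $\Sc\res(\alpha\setminus X)$ is an augmented construction sequence over $Ab_{\in X}$. The only hypothesis of \cref{prop:construction-shuffle} that is not already in hand is that $(A,X)$ is construction-closed in $\Sc$, and I would check that this passes to the union: given $i\in X$, choose $j<\beta$ with $i\in X_j$; since $(A,X_j)$ is construction-closed we have $C_i\subseteq Ab_{\in X_j}$, and $b_{\in X_j}\subseteq b_{\in X}$ because $X_j\subseteq X$, so $C_i\subseteq Ab_{\in X}$. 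As this holds for every $i\in X$, the pair $(A,X)$ is construction-closed, and \cref{prop:construction-shuffle} applies directly, giving the conclusion with the recorded distance predicates $\varphi_i$ unchanged (the proof of \cref{prop:construction-shuffle} explicitly preserves them, via \cref{lem:can't-believe-it's-not}).

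It is worth running a more self-contained argument as a check, since that is the route that actually uses the hypothesis that each $\Sc\res(\alpha\setminus X_j)$ is already an augmented construction sequence and explains the ``ascending chain'' framing. Fix $i\in\alpha\setminus X$ and set $D_j = Ab_{\in X_j}\cup\{b_m : m<i,\ m\notin X_j\}$ and $D = Ab_{\in X}\cup\{b_m : m<i,\ m\notin X\}$. A short computation gives $D_j = A\cup\{b_m : m\in X_j\vee m<i\}$ and $D = A\cup\{b_m : m\in X\vee m<i\} = \bigcup_{j<\beta}D_j$, an increasing union. By hypothesis $\tp(b_i/D_j)$ is atomic with distance predicate $\varphi_i$ for every $j$, and since $C_i\subseteq Ab_{<i}\subseteq D_j\subseteq D$ the predicate $\varphi_i$ is $\Sc_i$-definable over $C_i$ with $C_i\subseteq D$; so it remains only to verify that $\varphi_i$ is the distance predicate of $\tp(b_i/D)$, i.e.\ that $\varphi_i(c)=\inf\{d(c,b') : b'\equiv_D b_i\}$ for every $c$ in the monster.

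One inequality is free: realizations of $\tp(b_i/D)$ form a subset of the realizations of each $\tp(b_i/D_j)$, so the infimum over the former is at least $\varphi_i(c)$. The reverse inequality is the main obstacle, and it is precisely where the increasing-union structure must be used. Given $\e>0$, I would argue that the set of conditions $\tp(b_i/D)(x)\cup\{d(c,x)\le\varphi_i(c)+\e\}$ is finitely satisfiable: any finite subset mentions only finitely many parameters from $D=\bigcup_j D_j$, which (the $D_j$ being totally ordered by inclusion) all lie in a single $D_j$, and that finite subset is then satisfied by a realization of $\tp(b_i/D_j)$ within $\e$ of $c$, which exists because $\varphi_i$ is the distance predicate over $D_j$. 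By saturation of the monster the partial type is realized, producing $b'\equiv_D b_i$ with $d(c,b')\le\varphi_i(c)+\e$; letting $\e\to 0$ yields $\inf\{d(c,b') : b'\equiv_D b_i\}\le\varphi_i(c)$. Carrying this out for every $i\in\alpha\setminus X$ gives the lemma.
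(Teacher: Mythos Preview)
Both of your approaches are correct. Your first route---checking that $(A,X)$ is construction-closed and then invoking \cref{prop:construction-shuffle}---is a genuine shortcut that the paper does not take; it shows, in passing, that the hypothesis ``$\Sc\res(\alpha\setminus X_j)$ is an augmented construction sequence over $Ab_{\in X_j}$'' is redundant (it already follows from construction-closedness via \cref{prop:construction-shuffle}). The paper instead argues directly, and your second approach tracks that argument but with more machinery than is needed.

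Specifically, the paper's version of your second paragraph is a one-line observation: since $\varphi_i$ is the distance predicate of $\tp(b_i/D_j)$ for every $j$, the realization set of $\tp(b_i/D_j)$ in the monster is exactly the zero set of $\varphi_i$ and hence does not depend on $j$. Because $D=\bigcup_j D_j$, the realization set of $\tp(b_i/D)$ is the intersection of these (all equal) sets, so it is the same zero set, and $\varphi_i$ remains the distance predicate over $D$. Your compactness/finite-satisfiability argument reaches the same conclusion correctly, but the zero-set observation makes the $\varepsilon$ and saturation unnecessary.
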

\begin{proof}
  If $\beta$ is not a limit ordinal, then this is trivial, so assume that $\beta$ is a limit ordinal.

  Note that for any $i < \alpha$, we have $C_i \subseteq A \cup \{b_k : k \in X_j\} \cup \{b_k : k < i,~k \in X_j \}$ (since the set on the right-hand side always contains $A b_{<i}$). Therefore all we really need to check is that $\varphi_i$ is the distance predicate of $p \coloneq \tp(b_i / A \cup \{b_k : k \in \bigcup_{j < \beta}X_j\} \cup \{b_k : k < i,~k \in \bigcup_{j<\beta}X_j\})$. Since $\varphi_i$ is the distance predicate of $q_k\coloneq \tp(b_i/ A \cup \{b_k : k \in X_j\} \cup \{b_k : k < i,~k \in X_j \})$ for each $k < \beta$, we have that the set of realizations of $q_k$ in the monster model does not depend on $k$. Therefore it is also the same as the set of realizations of $p$ in the monster model, whereby $\varphi_i$ is still the distance predicate of $p$.
\end{proof}

\section{Self-sufficiency}

\begin{lem}\label{lem:res-still-aug}
  Fix an augmented construction sequence $\Sbf = (b_i,\varphi_i,C_i,\Sc_i)$ over $A$. For any construction-closed pair $(A',X)$ in $\Sbf$, $\Sbf\res X$ is an augmented construction sequence over $A'$ (relative to the theory $T \res \Sc_X$).
\end{lem}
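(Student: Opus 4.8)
The plan is to check the conditions in the definition of an augmented construction sequence over $A'$, reading everything in the reduct theory $T\res\Sc_X$. For each $i\in X$ write $D_i:=A'\cup\{b_j:j\in X,\ j<i\}$ for the base of the $i$-th type in the restricted sequence. By \cref{lem:const-closed-restrict} we already know that $\Sbf\res X$ is an augmented construction sequence over $A'$ with respect to the \emph{full} theory $T$; in particular $C_i\subseteq D_i$, both $C_i$ and $\Sc_i$ are countable, $\Sc_i\subseteq\Sc_X$, and $\varphi_i$ is $\Sc_i$-definable over $C_i$. All of these are bookkeeping conditions that transfer verbatim to $T\res\Sc_X$. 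The only substantive point is that for each $i\in X$ the predicate $\varphi_i$ is still the distance predicate of the \emph{reduct} type $\tp^{\Sc_X}(b_i/D_i)$.

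To prove this I would compare realization sets inside a single universe. Let $\M$ be a monster for $T$ and let $R:=\{c\in\M:c\equiv_{Ab_{<i}}b_i\}$, so that, by hypothesis, $\varphi_i=\inf\{d(\cdot,b'):b'\in R\}$ and hence $\{\varphi_i=0\}=\ol R$. Since $\Sc_i\subseteq\Sc_X$ and $C_i\subseteq D_i$, the predicate $\varphi_i$ is $\Sc_X$-definable over $D_i$, so it is meaningful in the reduct and defines the same function on the common universe of $\M$ and $\M\res\Sc_X$. Let $R^{\Sc_X}:=\{c:c\equiv^{\Sc_X}_{D_i}b_i\}$ be the realization set of the reduct type. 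One inclusion is immediate: having the same full type over $Ab_{<i}$ certainly implies having the same $\Sc_X$-type over $D_i$, so $R\subseteq R^{\Sc_X}$ and therefore $\inf\{d(\cdot,b'):b'\in R^{\Sc_X}\}\le\varphi_i$. For the reverse inclusion $R^{\Sc_X}\subseteq\ol R$, fix $b'\in R^{\Sc_X}$; because $\varphi_i$ is $\Sc_X$-definable over $C_i\subseteq D_i$ and $b'\equiv^{\Sc_X}_{D_i}b_i$, we get $\varphi_i(b')=\varphi_i(b_i)=0$, i.e.\ $b'\in\{\varphi_i=0\}=\ol R$. As $\varphi_i$ is $1$-Lipschitz, $R^{\Sc_X}\subseteq\{\varphi_i=0\}$ yields $\varphi_i\le\inf\{d(\cdot,b'):b'\in R^{\Sc_X}\}$. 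Combining the two inequalities gives $\varphi_i=\inf\{d(\cdot,b'):b'\in R^{\Sc_X}\}$, and since $\varphi_i$ is $\Sc_X$-definable over $D_i$ this says exactly that $\varphi_i$ is the distance predicate of $\tp^{\Sc_X}(b_i/D_i)$, as required.

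I expect the main obstacle to be the inclusion $R^{\Sc_X}\subseteq\ol R$: it asserts that simultaneously dropping parameters (from $Ab_{<i}$ down to $D_i$) and shrinking the language (from $\Lc$ to $\Sc_X$) cannot enlarge the realization set beyond the metric closure of the original one, and this is precisely where having recorded that $\varphi_i$ is definable over the \emph{surviving} parameters $C_i$ in the \emph{surviving} language $\Sc_i$ is essential. The one technical point to state along the way is that the reduct $\M\res\Sc_X$ of a monster for $T$ is again a monster for $T\res\Sc_X$, so that the distance predicate computed in it is the genuine one; granting this, the argument above goes through uniformly over all $i\in X$, and $\Sbf\res X$ is an augmented construction sequence over $A'$ relative to $T\res\Sc_X$.
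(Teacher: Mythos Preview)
Your proof is correct and follows the same idea as the paper's one-line argument: the paper simply asserts that ``each $\varphi_i$ for $i\in X$ is still a distance predicate in the $\Sc_X$-reduct,'' and what you have written is precisely the verification of that assertion via the two inclusions $R\subseteq R^{\Sc_X}\subseteq\ol R$. The only difference is level of detail---your explicit identification of the crucial use of $\Sc_i\subseteq\Sc_X$ and $C_i\subseteq D_i$ in the reverse inclusion is exactly the content the paper leaves implicit.
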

\begin{proof}
  Each $\varphi_i$ for $i \in X$ is still a distance predicate in the $\Sc_X$-reduct and so still isolates $\tp(b_i/A\cup\{b_j: j < i,~j \in X\})$ for each $i \in X$.
\end{proof}

\begin{defn}
  Fix an augmented construction sequence $\Sbf = (b_i,\varphi_i,C_i,\Sc_i)_{i < \alpha}$ with $(b_i)_{i<\alpha}$ an enumeration of some constructible model $M$ over $A$. A pair $(A',X)$ with $A' \subseteq A$ and $X \subseteq \alpha$ is \emph{self-sufficient in $\Sbf$} if it is construction-closed in $\Sbf$ and  $A'b_{\in X}$ is a dense subset of an $\Sc_X$-elementary substructure of $M$.
\end{defn}

\begin{lem}\label{lem:self-suff-is-atomic-mod}
  Fix an augmented construction sequence $\Sbf$. For any countable self-sufficient pair $(A',X)$ in $\Sbf$, $A'b_{\in X}$ is a dense subset of the unique separable atomic model of $(T \res \Sc_X)_{\ol{A'}}$ (i.e., the $\Sc_X$-reduct of $T$ with constants added for $\ol{A'}$).\footnote{Note that although $(T \res \Sc_X)_{\ol{A'}}$ is not necessarily a theory in a countable language, it is interdefinable with a theory in a countable language (namely $(T \res \Sc_X)_{A'}$), so general facts regarding uniqueness of separable atomic models still apply to it.}
\end{lem}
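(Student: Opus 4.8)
The plan is to show that $N \coloneqq \ol{A'b_{\in X}}$ is a separable \emph{prime} model of the complete theory $(T\res\Sc_X)_{\ol{A'}}$ and then to invoke the continuous generalization of Vaught's theorem to deduce that it is atomic, hence the unique separable atomic model. First I would set up the reduct: since $(A',X)$ is construction-closed, \cref{lem:res-still-aug} tells us that $\Sbf\res X$ is an augmented construction sequence over $A'$ relative to $T\res\Sc_X$, so re-enumerating $(b_i)_{i\in X}$ in order gives a construction sequence over $A'$ in the language $\Sc_X$ whose elements enumerate $A'b_{\in X}$. Because $(A',X)$ is countable, $A'b_{\in X}$ is countable and $N$ is separable; and by self-sufficiency $N$ is an $\Sc_X$-elementary substructure of $M$, hence a separable model of the complete theory $(T\res\Sc_X)_{\ol{A'}}$ (note $\ol{A'}\subseteq N$ since $A'\subseteq N$ and $N$ is metrically closed).

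Next I would establish primeness. Let $M'$ be any model of $(T\res\Sc_X)_{\ol{A'}}$; then $M'$ is in particular an $\Sc_X$-structure containing $A'$, so \cref{lem:construction-embedding}, applied to the construction sequence $\Sbf\res X$ over $A'$ inside the theory $T\res\Sc_X$, yields an elementary map $f\colon A'b_{\in X}\to M'$ fixing $A'$ pointwise. Since elementary maps are isometric and $M'$ is metrically complete, $f$ extends continuously to a map $\ol f\colon N\to M'$; this extension is still elementary (each $\Sc_X$-formula is continuous, so its value is preserved in the limit), and, being continuous and fixing the dense subset $A'$ of $\ol{A'}$, it fixes $\ol{A'}$ pointwise. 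Thus $N$ embeds elementarily over $\ol{A'}$ into every model of $(T\res\Sc_X)_{\ol{A'}}$, i.e.\ $N$ is a prime model.

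Finally I would apply the continuous version of Vaught's theorem \cite[Cor.~12.9]{MTFMS}: a separable prime model of a complete theory is atomic, and separable atomic models are unique up to isomorphism. As the footnote records, these facts apply to $(T\res\Sc_X)_{\ol{A'}}$ through its interdefinability with the countable-language theory $(T\res\Sc_X)_{A'}$. Hence $N$ is the unique separable atomic model of $(T\res\Sc_X)_{\ol{A'}}$, and $A'b_{\in X}$, being dense in $N$, is a dense subset of it, as required.

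The main obstacle I anticipate is not a deep calculation but bookkeeping: carefully verifying that the elementary map on the countable set $A'b_{\in X}$ extends to a genuine elementary \emph{embedding} of its completion $N$, and keeping straight over which base ($A'$ versus $\ol{A'}$) and in which language ($\Sc_X$ versus $\Lc$) the relevant models, the construction sequence, and the cited uniqueness statement all live. The substantive mathematical content---constructible implies prime, prime implies atomic, and uniqueness of separable atomic models---is supplied directly by \cref{lem:construction-embedding} together with \cite{MTFMS}, so the remaining work is in assembling these inputs and confirming separability and elementarity of the extension.
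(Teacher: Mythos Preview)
Your proposal is correct and follows essentially the same route as the paper: use \cref{lem:res-still-aug} to see $\Sbf\res X$ is a construction sequence over $A'$ in $T\res\Sc_X$, apply \cref{lem:construction-embedding} to embed $A'b_{\in X}$ into an arbitrary model, extend by continuity to $\ol{A'b_{\in X}}$, conclude primeness, and then invoke \cite[Cor.~12.9]{MTFMS} together with the $A'$-versus-$\ol{A'}$ interdefinability. If anything, you are slightly more explicit than the paper in using self-sufficiency to confirm that $\ol{A'b_{\in X}}$ is actually an $\Sc_X$-model before calling it prime.
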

\begin{proof}
  By \cref{lem:res-still-aug}, we have that $\Sbf\res X$ is an augmented construction sequence relative to $T \res \Sc_X$. Therefore, by \cref{lem:construction-embedding}, we have that for any model $M \models (T \res \Sc_X)_{A'}$, there is an elementary map $f : A'b_{\in X} \to M$ (that fixes $A'$ pointwise). This extends to an elementary embedding of $\ol{A'b_{\in X}}$. Since we can do this for any model $M$, we have that $\ol{A'b_{\in X}}$ is a separable prime model of $(T \res \Sc_X)_{A'}$. Therefore by \cite[Cor.~12.9]{MTFMS} it is the unique separable atomic model of $(T \res \Sc_X)_{A'}$. Since every element of $\ol{A'}$ is definable over $A'$, we have that it is the unique separable atomic model of $(T\res \Sc_X)_{\ol{A'}}$ as well.
\end{proof}

It is routine to show that for any augmented construction sequence $\Sbf$ over $A$ (with length $\alpha$) and any pair $(D,Y)$ with $D \subseteq A$ and $Y \subseteq \alpha$, there is a unique smallest construction-closed pair $(B,X)$ such that $B \supseteq D$ and $X \supseteq Y$. Moreover, if $(D,Y)$ is countable, then this $(B,X)$ will be as well. (This follows from the fact that a countably branching well-founded tree is countable.) We will call a pair $(B,X)$ like this the \emph{construction-closure} of $(D,Y)$.

\begin{defn}
  Given sets of parameters $B^0$ and $B^1$, a \emph{densely defined isomorphism between $B^0$ and $B^1$} is an elementary map $f: B^0 \to \ol{B^1}$ with dense image.
\end{defn}

Given a densely defined isomorphism $f$ between $B^0$ and $B^1$, we clearly have that $f$ has a unique continuous extension to $\ol{B^0}$ and that this is an elementary bijection between $\ol{B^0}$ and $\ol{B^1}$. We will also write this as $f$ and write its inverse as $f^{-1}$.

We will now prove our main technical lemma.

\begin{lem}\label{lem:self-suff-exists-duo}
  Fix constructible models $M^0$ and $M^1$ of the same complete theory. Fix augmented construction sequences $\Sbf^0=(b_i^0,C_i^0,\Sc_i^0)_{i < \alpha^1}$ over $A^1$ and $\Sbf^1=(b_i^1,C_i^1,\Sc_i^1)_{i < \alpha^1}$ over $A^1$ enumerating $M^0$ and $M^1$, respectively. Fix a densely defined isomorphism $f : A^0 \to \ol{A^1}$.

  For any countable $D^0 \subseteq A^0$, $Y^0 \subseteq \alpha^0$, $D^1 \subseteq A^1$, and $Y^1 \subseteq \alpha^1$, there are countable self-sufficient pairs $(B^0,X^0)$ and $(B^1,X^1)$ with $B^0 \supseteq D^0$, $X^0 \supseteq Y^0$, $B^1 \supseteq D^1$, and $X^1 \supseteq Y^1$ such that $\Sc_{X^0}^0 = \Sc_{X_1}^1$ (i.e., $\bigcup_{i \in X^0}\Sc^0_i = \bigcup_{i \in X^1}\Sc^1_i$) and $f\res B^0$ is a densely defined isomorphism between $B^0$ and $B^1$. 
\end{lem}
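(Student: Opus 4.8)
The plan is a simultaneous back-and-forth closure carried out over $\omega$ stages, producing increasing chains of countable pairs $(B^0_n,X^0_n)_{n<\omega}$ and $(B^1_n,X^1_n)_{n<\omega}$ whose unions $(B^0,X^0)=(\bigcup_n B^0_n,\bigcup_n X^0_n)$ and $(B^1,X^1)$ are the desired self-sufficient pairs. I would initialize $(B^0_0,X^0_0)$ and $(B^1_0,X^1_0)$ as the construction-closures of $(D^0,Y^0)$ and $(D^1,Y^1)$, which are countable by the remark preceding the lemma. At each stage I enlarge both pairs, keeping them countable, so as to make progress toward four conditions at once: construction-closedness, the self-sufficiency (Tarski--Vaught) condition on each side, the matching of $f$-images, and the language equality $\Sc^0_{X^0}=\Sc^1_{X^1}$. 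Each condition is closure under countably many operations on a countable input, so any instance not yet handled can be deferred to a later stage, and the union satisfies all of them.

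Concretely, at stage $n+1$ I would apply, in turn: (a) the construction-closure operation to each side, which keeps the pairs countable; (b) a Tarski--Vaught step, adding to $B^0_n b^0_{\in X^0_n}$, as new construction elements $b^0_i$ together with their indices $i$, approximate witnesses for $\inf_x\psi$ (for each $\e>0$) for every $\Sc_{X^0_n}$-formula $\psi$ over the current set, and symmetrically on side $1$; (c) an $f$-matching step: for each $b\in B^0_n$, since $f(b)\in\ol{A^1}$, I add to $B^1$ a sequence from $A^1$ converging to $f(b)$, and conversely, using that $f$ extends to an elementary bijection $\ol{A^0}\to\ol{A^1}$, for each $b\in B^1_n$ I add to $B^0$ a sequence from $A^0$ whose $f$-images converge to $b$. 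Since each operation adds only countably many points or indices to a countable pair, every stage stays countable.

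Passing to the union, construction-closedness is immediate, since any index in $X^0=\bigcup_n X^0_n$ already appears in some $X^0_n$ with its $C_i$ captured there, and \cref{lem:asc-chain-ok} confirms that $\Sbf^0\res(\alpha^0\setminus X^0)$ and $\Sbf^1\res(\alpha^1\setminus X^1)$ remain augmented construction sequences. The $f$-matching steps force $\ol{f(B^0)}=\ol{B^1}$, so $f\res B^0$ is a densely defined isomorphism between $B^0$ and $B^1$. For self-sufficiency, the key observation is that any $\Sc_{X^0}$-formula uses only finitely many symbols, hence lies in $\Sc_{X^0_n}$ for some $n$; since the Tarski--Vaught step at every stage $\geq n+1$ supplied approximate witnesses for it, $B^0 b^0_{\in X^0}$ passes the approximate Tarski--Vaught test for $\Sc_{X^0}$ and is therefore dense in an $\Sc_{X^0}$-elementary substructure of $M^0$, and likewise on side $1$; \cref{lem:self-suff-is-atomic-mod} then identifies these with the unique separable atomic models of the corresponding reducts.

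The main obstacle is the language condition $\Sc^0_{X^0}=\Sc^1_{X^1}$, since it is the only constraint coupling the two sides and it feeds back into every other operation: enlarging $X^0$ to achieve construction-closedness or a Tarski--Vaught witness enlarges $\Sc^0_{X^0}$, which must then be matched on side $1$ by adding indices $j$ carrying the new symbols in $\Sc^1_j$, and this in turn may enlarge $\Sc^1_{X^1}$ and trigger further closure. To make this back-and-forth terminate in $\omega$ steps I would add a fifth per-stage operation covering every symbol currently in $\Sc^0_{X^0_n}$ by indices adjoined to $X^1$, and symmetrically. The delicate requirement is that each symbol introduced on one side actually be \emph{available} on the other, i.e.\ that the total stocks $\bigcup_{i<\alpha^0}\Sc^0_i$ and $\bigcup_{j<\alpha^1}\Sc^1_j$ can be taken to agree on the countably many symbols we ever touch. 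I would secure this by enlarging (on demand, within each countable stage) the finitely or countably many missing $\Sc_j$'s to include the required symbols, which preserves augmentedness because enlarging a countable language preserves definability of the $\varphi_j$; with symbols thus available on both sides, the language back-and-forth closes up exactly like the other operations, and all five conditions hold in the union.
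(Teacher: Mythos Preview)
Your overall strategy—an $\omega$-stage simultaneous closure under construction-closure, elementary-substructure witnessing, and $f$-matching—is exactly the paper's. The substantive divergence is in how you handle self-sufficiency together with the language condition, and here the paper's route is simpler and sidesteps the ``availability'' problem you flag as the main obstacle.

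Rather than doing Tarski--Vaught witnessing on each side in that side's own current language (your step~(b)) and then running a separate language-matching back-and-forth (your fifth operation), the paper at stage $n$ applies downward L\"owenheim--Skolem on each side in the \emph{combined} language $\Sc^0_{Y^0_n}\cup\Sc^1_{Y^1_n}$: it chooses a separable $(\Sc^0_{Y^0_n}\cup\Sc^1_{Y^1_n})$-elementary substructure $N^0_n\preceq M^0$ containing the current data (and symmetrically $N^1_n\preceq M^1$), then picks countable $(D^0_{n+\nicefrac{1}{2}},Y^0_{n+\nicefrac{1}{2}})$ with $D^0_{n+\nicefrac{1}{2}}b^0_{\in Y^0_{n+\nicefrac{1}{2}}}$ dense in $N^0_n$, and finally takes construction-closures. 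Because both sides are carried along in the same union language at every stage, there is no separate matching step, and the question of whether a symbol from one side is ``available'' on the other never arises.

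Your proposed remedy for availability---enlarging selected $\Sc_j$'s on demand---does not prove the lemma as stated. The augmented construction sequences $\Sbf^0$ and $\Sbf^1$, and in particular the families $(\Sc^0_i)_{i<\alpha^0}$ and $(\Sc^1_j)_{j<\alpha^1}$, are given as inputs; the conclusion $\Sc^0_{X^0}=\Sc^1_{X^1}$ refers to these given families, so you are not free to modify them mid-proof. If you do, you establish self-sufficiency and language equality only for the modified sequences, which is not the stated conclusion. The paper's combined-language L\"owenheim--Skolem step is precisely what replaces this device.

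Two minor points: your appeal to \cref{lem:asc-chain-ok} is out of place here, since that lemma concerns the complementary sequence $\Sbf\res(\alpha\setminus X)$ rather than self-sufficiency of $(B,X)$; and the closing invocation of \cref{lem:self-suff-is-atomic-mod} is unnecessary, as identifying the resulting sets with atomic models is not part of what this lemma asserts.
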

\begin{proof}
  Let $D_0^0 = D^0$, $Y_0^0 = Y^0$, $D_0^1 = D^1$, and $Y_0^1 = Y^1$. Given countable $(D_n^0,Y_n^0)$ and $(D_n^1,Y_n^1)$, build $(D_{n+1}^0Y_{n+1}^0)$ and $(D_{n+1}^1,Y_{n+1}^1)$ as follows:
  \begin{itemize}
  \item Find a separable $\Sc_{Y^0_{n}}^0\cup \Sc_{Y^1_{n}}^1$-elementary substructure $N_n^0 \subseteq M^0$ containing $D_n^0\cup b^0_{\in Y^0_n}$ and a countable dense subset of $A^0 \cap \ol{f^{-1}(D_n^1)}$.
  \item  Likewise, find a separable $\Sc_{Y^0_{n}}^0\cup \Sc_{Y_{n}}^1$-elementary substructure $N_n^1 \subseteq M^1$ containing $D_n^1 \cup b^1_{\in Y^1_n}$ and a countable dense subset of $A^1 \cap \ol{f(D_n^0)}$.
  \item Find countable $(D^0_{n + \nicefrac{1}{2}},Y^0_{n + \nicefrac{1}{2}})$ such that $D^0_{n + \nicefrac{1}{2}}b^0_{\in Y^0_{n+\nicefrac{1}{2}}}$ is dense in $N^0_n$. Similarly, find countable $(D^1_{n + \nicefrac{1}{2}},Y^1_{n + \nicefrac{1}{2}})$ such that $D^1_{n + \nicefrac{1}{2}}b^1_{\in Y^1_{n+\nicefrac{1}{2}}}$ is dense in $N^1_n$. %
    \begin{sloppypar}
    \item Let $(D^0_{n+1},Y^0_{n+1})$ be the construction-closure of $(D^0_{n + \nicefrac{1}{2}},Y^0_{n + \nicefrac{1}{2}})$, and let $(D^1_{n+1},Y^1_{n+1})$ be the construction-closure of $(D^1_{n + \nicefrac{1}{2}},Y^1_{n + \nicefrac{1}{2}})$.
    \end{sloppypar}
  \end{itemize}
  Note that $(D^0_{n+1},Y^0_{n+1})$ and $(D^1_{n+1},Y^1_{n+1})$ are still countable. Finally let $B^0 = \bigcup_{n < \omega}D^0_n$, $X^0 = \bigcup_{n < \omega}Y^0_n$, $B^1 = \bigcup_{n<\omega}D^0_n$, and $X^1 = \bigcup_{n<\omega}Y^1_n$. It is immediate from the definition that $(B^0,X^0)$ is construction-closed in $\Sbf^0$ and $(B^1,X^1)$ in $\Sbf^1$. Furthermore, we have that $Bb^0_{\in X} = \bigcup_{n < \omega}D^0_{n+\nicefrac{1}{2}}b^0_{\in Y^0_{n+\nicefrac{1}{2}}}$ is (for every $n<\omega$) a dense subset of an $\Sc^0_{Y_n}\cup\Sc^1_{Y_n}$-elementary substructure of $M^0$ (specifically $\ol{\bigcup_{n < \omega}N^0_n}$). Therefore $Bb^0_{\in X^0}$ is a dense subset of an $\Sc_{X^0}$-elementary substructure of $M^0jk$. Hence $(B^0,X^0)$ is self-sufficient in $\Sbf^0$. By the same argument $(B^1,X^1)$ is self-sufficient in $\Sbf^1$. Finally, by construction, we have that $\Sc_{X^0}^0 = \bigcup_{n < \omega}\Sc^0_{Y^0_n}\cup \Sc^1_{Y^1_n} = \Sc_{X^1}^1$ and that $f(B^0)$ is dense in $\ol{B^1}$, whereby $(f\res B^0) : B^0 \to \ol{B^1}$ is a densely defined isomorphism.
\end{proof}

\section{Uniqueness of constructible models}

\begin{lem}\label{lem:S-iso-is-iso}
  Fix augmented construction sequences $\Sbf^0 = (b_i^0,C_i^0,\Sc_i^0)_{i < \alpha^0}$ over $A^0$ and $\Sbf^1=(b_i^1,C_i^1,\Sc_i^1)_{i < \alpha^1}$ over $A^1$ with $A^0$ and $A^1$ countable and such that $\Sbf^0$ and $\Sbf^1$ enumerate some models $M^0$ and $M^1$, respectively. Fix self-sufficient pairs $(A^0,X^0)$ in $\Sbf^0$ and $(A^1,X^1)$ in $\Sbf^1$. If $\Sc\coloneq \Sc^0_{X^0} = \Sc^1_{X^1}$, then any dense $\Sc$-isomorphism $g : A^0b^0_{\in X^0} \to \ol{A^1b^1_{\in X^1}}$ is a dense $\Lc$-isomorphism.
\end{lem}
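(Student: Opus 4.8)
The plan is to prove $\Lc$-elementarity of $g$ by transfinite induction along the $\Lc$-construction sequence that builds $b^0_{\in X^0}$ over $A^0$, exploiting that every distance predicate involved is $\Sc$-definable. First I would use \cref{lem:const-closed-restrict} (which applies since $(A^0,X^0)$ is construction-closed) to reindex $\Sbf^0\res X^0$ as a genuine $\Lc$-construction sequence $(e^0_j)_{j<\gamma}$ over $A^0$, whose $j$-th distance predicate $\psi_j$ isolates $\tp(e^0_j/A^0 e^0_{<j})$. Construction-closedness, together with the definition of an augmented construction sequence, guarantees that $\psi_j$ is $\Sc$-definable over a set $C_j\subseteq A^0 e^0_{<j}$. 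Since an elementary map is determined by its behaviour on finite tuples and any finite subtuple of $b^0_{\in X^0}$ lies in some $e^0_{<j}$, it suffices to show, by induction on $j\le\gamma$, that $g$ preserves the full $\Lc$-type of the enumerated tuple $A^0 e^0_{<j}$. The dense-image hypothesis then upgrades the resulting $\Lc$-elementary map to a dense $\Lc$-isomorphism, and \cref{lem:self-suff-is-atomic-mod} ensures that $\ol{A^0 b^0_{\in X^0}}$ and $\ol{A^1 b^1_{\in X^1}}$ are honest atomic $\Sc$-models, so that the conclusion really is an isomorphism onto $\ol{A^1 b^1_{\in X^1}}$.

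The heart of the argument is the successor step, which is the continuous analogue of the classical fact that an $\Sc$-definably isolated $\Lc$-type is pinned down by its $\Sc$-part. Assume $g$ preserves $\tp(A^0 e^0_{<j})$ and fix an automorphism $\tau$ of the monster agreeing with $g$ pointwise on $A^0 e^0_{<j}$. Pushing the isolating predicate forward, $\tau(\psi_j)$ is the distance predicate of $\tp(\tau(e^0_j)/g(A^0 e^0_{<j}))$; because $\psi_j$ is $\Sc$-definable over $C_j$ and $\tau(C_j)=g(C_j)$, this pushed-forward predicate is literally $\psi_j$ evaluated at the parameters $g(C_j)$. Now $\Sc$-elementarity of $g$ gives $\psi_j(g(C_j),g(e^0_j))=\psi_j(C_j,e^0_j)=0$. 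Since in continuous logic the realization set of a complete type is type-definable, hence closed, the equation $\psi_j(g(C_j),g(e^0_j))=0$ forces $g(e^0_j)$ to realize $\tp(\tau(e^0_j)/g(A^0 e^0_{<j}))$, i.e.\ $g(e^0_j)\equiv_{g(A^0 e^0_{<j})}\tau(e^0_j)$. Composing with $\tau$ yields $g(A^0 e^0_{\le j})\equiv A^0 e^0_{\le j}$, completing the step. Limit stages are immediate, as $\Lc$-types of tuples are determined by their finite restrictions.

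The main point of difficulty is exactly this successor step, and within it the bookkeeping that $\psi_j$ is $\Sc$-definable over parameters $C_j\subseteq A^0 e^0_{<j}$ lying inside the domain of $g$: it is construction-closedness that keeps $C_j$ from escaping the domain, and without it $\Sc$-elementarity of $g$ could not see the entire isolating predicate and the argument would break. The one remaining point to verify is the base case $j=0$, namely that $g\res A^0$ is $\Lc$-elementary rather than merely $\Sc$-elementary; this is vacuous when $A^0=\varnothing$ and, in the recursive application, is inherited from the $\Lc$-elementary correspondence already fixed on the base. Everything else is the routine transfer between tuples and their metric closures together with the symmetry of the setup, so the essential content is genuinely localized in the $\Sc$-to-$\Lc$ upgrade of the isolating predicate.
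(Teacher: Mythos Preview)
Your inductive argument is essentially a fleshed-out version of the paper's one-line proof: the paper simply asserts that, by self-sufficiency, the $\Lc$-type of any finite tuple in $A^0b^0_{\in X^0}$ (and in $A^1b^1_{\in X^1}$) is determined by its $\Sc$-type, and your transfinite induction along the restricted construction sequence---pushing the $\Sc$-definable distance predicate $\psi_j$ across via an automorphism---is exactly how one would justify that assertion.

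Your remark about the base case is well taken and worth flagging: as literally stated the lemma seems to require that $g\res A^0$ already be $\Lc$-elementary (indeed, one can cook up trivial counterexamples with $X^0=X^1=\varnothing$ and $\Sc$ too small to see $A^0$), and the paper's proof tacitly uses this as well. The hypothesis is supplied in the only place the lemma is invoked, \cref{lem:dense-iso-extend}, where $g$ is constructed to extend a given densely defined $\Lc$-isomorphism on $A^0$, so nothing in the paper is actually in danger. Your final appeal to \cref{lem:self-suff-is-atomic-mod} is unnecessary: density of the image is already part of the hypothesis that $g$ is a dense $\Sc$-isomorphism, so once $g$ is $\Lc$-elementary it is automatically a dense $\Lc$-isomorphism.
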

\begin{proof}
  By self-sufficiency, we know that the $\Lc$-types of every finite tuple of elements of $A^0b^0_{\in X^0}$ and $A^1b^1_{\in X^1}$ are determined by their $\Sc$-types. Therefore $g$ is an $\Lc$-elementary map and so is a dense $\Lc$-isomorphism.
\end{proof}

Given our notation of $\Sc_X$ for the language $\bigcup_{i \in X}\Sc_i$, we will write $\Sc(B)$ for the language $\Sc$ with constants added for the elements of a set of parameters $B$.

\begin{lem}\label{lem:dense-iso-extend}
  Fix augmented construction sequences $\Sbf^0 = (b_i^0,C_i^0,\Sc_i^0)_{i < \alpha^0}$ over $A^0$ and $\Sbf^1=(b_i^1,C_i^1,\Sc_i^1)_{i < \alpha^1}$ over $A^1$ with $A^0$ and $A^1$ countable and such that $\Sbf^0$ and $\Sbf^1$ enumerate some models $M^0$ and $M^1$, respectively. Fix countable sets $X^0\subseteq \alpha^0$ and $X^1 \subseteq \alpha^1$ such that $(A^0,X^0)$ and $(A^1,X^1)$ are self-sufficient in $\Sbf^0$ and $\Sbf^1$. If $\Sc^0_{X^0} = \Sc^1_{X^1}$, then for any densely defined isomorphism $f:A^0 \to \ol{A^1}$, there is a densely defined isomorphism $g: A^0b^0_{\in X^0} \to \ol{A^1b^1_{\in X^1}}$ that extends $f$.
\end{lem}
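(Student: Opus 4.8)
The plan is to reduce everything to the uniqueness of separable atomic models, via \cref{lem:self-suff-is-atomic-mod}, transport the atomic model along $f$, and then upgrade the resulting $\Sc$-isomorphism to an $\Lc$-isomorphism using \cref{lem:S-iso-is-iso}. Write $\Sc = \Sc^0_{X^0} = \Sc^1_{X^1}$.

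First I would invoke \cref{lem:self-suff-is-atomic-mod} on both sides. Since $(A^0,X^0)$ is a countable self-sufficient pair in $\Sbf^0$, the set $A^0b^0_{\in X^0}$ is a dense subset of $N^0 \coloneq \ol{A^0b^0_{\in X^0}}$, which is the unique separable atomic model of $(T \res \Sc)_{\ol{A^0}}$. Symmetrically, $N^1 \coloneq \ol{A^1b^1_{\in X^1}}$ is the unique separable atomic model of $(T \res \Sc)_{\ol{A^1}}$. Next, I would use $f$ to identify these two base theories: $f$ extends to an $\Lc$-elementary bijection $\ol{A^0} \to \ol{A^1}$, and in particular an $\Sc$-elementary one, so renaming the constants of $(T \res \Sc)_{\ol{A^1}}$ along $f$ turns it into exactly $(T \res \Sc)_{\ol{A^0}}$. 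Under this renaming $N^1$ becomes a model of $(T \res \Sc)_{\ol{A^0}}$, and because $f$ carries $\Sc$-definable predicates over $\ol{A^0}$ to $\Sc$-definable predicates over $\ol{A^1}$ with matching realization sets, atomicity is preserved, so $N^1$ remains a separable atomic model of $(T \res \Sc)_{\ol{A^0}}$.

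Now by uniqueness of the separable atomic model of $(T \res \Sc)_{\ol{A^0}}$, there is an $\Sc$-isomorphism $g : N^0 \to N^1$ fixing the (renamed) constants. Since the constant for $a \in \ol{A^0}$ is interpreted as $f(a)$ in $N^1$, fixing the constants says precisely that $g$ extends $f$. Restricting $g$ to the dense subset $A^0b^0_{\in X^0}$ yields a map into $N^1 = \ol{A^1b^1_{\in X^1}}$ whose image is dense (as $g$ is an isometric surjection), i.e.\ a dense $\Sc$-isomorphism $g : A^0b^0_{\in X^0} \to \ol{A^1b^1_{\in X^1}}$ extending $f$. Finally, \cref{lem:S-iso-is-iso} promotes this dense $\Sc$-isomorphism to a dense $\Lc$-isomorphism, giving the desired densely defined isomorphism.

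The step I expect to be the main obstacle is the transport along $f$: one must check carefully that ``atomic over $\ol{A^0}$'' and ``atomic over $\ol{A^1}$'' correspond under $f$, so that $N^1$ really is atomic as a model of the renamed theory, and that the constants are interpreted via $f$ so that the isomorphism produced by uniqueness genuinely extends $f$ rather than merely respecting the $\Sc$-structure abstractly. Everything else is routine use of the earlier lemmas.
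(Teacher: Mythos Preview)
Your proof is correct and follows essentially the same route as the paper: invoke \cref{lem:self-suff-is-atomic-mod} on both sides, use uniqueness of separable atomic models to get an $\Sc$-isomorphism extending $f$, then upgrade via \cref{lem:S-iso-is-iso}. The only cosmetic difference is that the paper handles the transport step you flag as the main obstacle by first applying an automorphism of the monster so that $\ol{A^0}=\ol{A^1}$ and $f$ is the identity, which makes the two theories literally equal and avoids the renaming bookkeeping.
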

\begin{proof}
  Let $\Sc = \Sc^0_{X^0} = \Sc^1_{X^1}$. By applying an automorphism of the monster, we may assume that $\ol{A^0} = \ol{A^1}$ and that $f: A^0 \to \ol{A^1}$ is the identity map. By \cref{lem:self-suff-is-atomic-mod}, the $\Sc(\ol{A^0})$-reduct of $\ol{A^0b^0_{\in X^0}}$ is the unique separable atomic model of $(T\res \Sc)_{\ol{A^0}}$. Likewise, the $\Sc(\ol{A^1})$-reduct of $\ol{A^1b^1_{\in X^1}}$ is the unique separable atomic model of $(T\res \Sc)_{\ol{A^1}}$. Since $\ol{A^0} = \ol{A^1}$, $\Sc(\ol{A^0})$ and $\Sc(\ol{A^1})$ are the same language and $(T\res \Sc)_{\ol{A^0}}$ and $(T\res \Sc)_{\ol{A^1}}$ are the same theory. Therefore, we have that there is an $\Sc$-isomorphism between $\ol{A^0b^0_{\in X^0}}$ and $\ol{A^1b^1_{\in X^1}}$ that fixes $\ol{A^0} = \ol{A^1}$ pointwise. By \cref{lem:S-iso-is-iso}, this is a dense $\Lc$-isomorphism.
\end{proof}

\begin{lem}\label{lem:dense-iso-amalg}
  Fix sets $B^0,C^0 \supseteq A^0$ and $B^1,C^1 \supseteq A^1$ and densely defined isomorphisms $f : B^0 \to \ol{B^1}$ and $g: C^0 \to \ol{C^1}$ such that $f \res A^0 = g\res A^0$. If $B^0 \indwo_{A^0}C^0$, then $f \cup g$ is a densely defined isomorphism between $B^0\cup C^0$ and $B^1\cup C^1$. 
\end{lem}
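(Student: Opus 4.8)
The plan is to reduce the whole statement to a single type-equality and then prove that equality by an automorphism argument driven by weak orthogonality. Concretely, I would isolate the following claim: for every finite tuple $\bar{b}$ from $B^0$ and every finite tuple $\bar{c}$ from $C^0$, we have $\tp(\bar{b}\bar{c}) = \tp(f(\bar{b})g(\bar{c}))$. This claim does all the work at once. Taking $\bar{b} = \bar{c} = x$ for any $x \in B^0 \cap C^0$ forces $d(f(x),g(x)) = 0$, so $f$ and $g$ agree on the overlap and $f \cup g$ is a well-defined function; applied to arbitrary tuples (splitting a tuple from $B^0 \cup C^0$ into a $B^0$-part and a $C^0$-part) it says precisely that $f \cup g$ is elementary. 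The dense-image condition is then automatic: since $\ol{f(B^0)} = \ol{B^1}$ and $\ol{g(C^0)} = \ol{C^1}$, the closure of $f(B^0) \cup g(C^0)$ is exactly $\ol{B^1 \cup C^1}$.

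To prove the claim, I would pass to the monster and extend the partial elementary maps $f$ and $g$ to automorphisms $\sigma$ and $\tau$, so that $f(\bar{b}) = \sigma(\bar{b})$ and $g(\bar{c}) = \tau(\bar{c})$. Since $f\res A^0 = g \res A^0$, the automorphisms $\sigma$ and $\tau$ agree on $A^0$, so $\eta \coloneq \sigma^{-1}\tau$ fixes $A^0$ pointwise. Setting $\bar{c}' \coloneq \eta(\bar{c})$, the fact that $\eta$ is an automorphism fixing $A^0$ gives $\eta(C^0) \equiv_{A^0} C^0$ (with compatible enumerations), and hence $\bar{c}' \equiv_{A^0} \bar{c}$ by restricting to the subtuple.

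The crucial upgrade is from equivalence over $A^0$ to equivalence over $A^0B^0$, and this is exactly where the hypothesis $B^0 \indwo_{A^0} C^0$ enters. By \cref{lem:can't-believe-it's-not} part~\ref{entails-0}, weak orthogonality is equivalent to $\tp(C^0/A^0) \vdash \tp(C^0/A^0 B^0)$, so any $A^0$-conjugate of $C^0$ is already an $A^0B^0$-conjugate. Applying this to $\eta(C^0) \equiv_{A^0} C^0$ yields $\eta(C^0) \equiv_{A^0 B^0} C^0$, and restricting to the relevant subtuple gives $\bar{c}' \equiv_{A^0 B^0} \bar{c}$, in particular $\bar{c}' \equiv_{\bar{b}} \bar{c}$ since $\bar{b} \subseteq B^0$. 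Thus $\bar{b}\bar{c} \equiv \bar{b}\bar{c}'$, and applying $\sigma$ to both sides, together with $\sigma(\bar{c}') = \sigma\eta(\bar{c}) = \tau(\bar{c}) = g(\bar{c})$ and $\sigma(\bar{b}) = f(\bar{b})$, gives $\bar{b}\bar{c} \equiv \sigma(\bar{b})\sigma(\bar{c}) \equiv f(\bar{b})g(\bar{c})$, which is the claim. The step I expect to require the most care is precisely this $A^0$-to-$A^0B^0$ upgrade: one must invoke weak orthogonality in the exact form of \cref{lem:can't-believe-it's-not} part~\ref{entails-0} and keep the enumerations of $C^0$ and $\eta(C^0)$ compatible so that passing to the subtuple $\bar{c}$ is legitimate.
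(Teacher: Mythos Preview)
Your argument is correct. The automorphism computation with $\eta = \sigma^{-1}\tau$ is exactly the right way to turn $B^0 \indwo_{A^0} C^0$ into the type equality $\bar b\bar c \equiv f(\bar b)g(\bar c)$, and your reductions (well-definedness on $B^0\cap C^0$ via $d(f(x),g(x))=0$, density of the image) are clean.

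The paper's proof is organized a bit differently and is considerably terser: it asserts $f(B^0)\indwo_{f(A^0)}g(C^0)$ as ``clear'' and then passes to closures via \cref{lem:can't-believe-it's-not}\ref{wo-closedness} to obtain $B^1\indwo_{A^1}C^1$, leaving the elementarity of $f\cup g$ implicit. Your automorphism argument is precisely what unpacks that ``clearly'': applying $\sigma$ to $B^0\indwo_{A^0}C^0$ gives $f(B^0)\indwo_{f(A^0)}\sigma(C^0)$, and since $\eta$ fixes $A^0$ one has $g(C^0)=\tau(C^0)\equiv_{f(A^0)}\sigma(C^0)$, which both yields the weak orthogonality on the image side and, via the entailment form, the equality $f(B^0)g(C^0)\equiv_{f(A^0)} f(B^0)\sigma(C^0)=\sigma(B^0C^0)$. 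So the two routes are the same idea; yours simply goes straight for elementarity of $f\cup g$ rather than detouring through $B^1\indwo_{A^1}C^1$, and in doing so fills in what the paper leaves to the reader.
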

\begin{proof}
  We clearly still have that $f(B^0)\indwo_{f(A^0)}g(C^0)$. By \cref{lem:can't-believe-it's-not} part \ref{wo-closedness}, we have that $\ol{f(B^0)}\indwo_{\ol{f(A^0)}}\ol{g(C^0)}$. Since $\ol{f(B^0)} = \ol{B^1}$, $\ol{f(A^0)} = \ol{A^1}$, and $\ol{g(C^0)} = \ol{C^1}$, we have $B^1 \indwo_{A^1}C^1$ by \cref{lem:can't-believe-it's-not} part \ref{wo-closedness} again.
\end{proof}

\begin{thm}\label{thm:main-theorem}
  Any two constructible models of a complete continuous first-order theory are isomorphic.
\end{thm}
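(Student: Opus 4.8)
The plan is to run a transfinite back-and-forth argument, maintaining as invariants a matched pair of self-sufficient pairs together with a densely defined $\Lc$-isomorphism between the substructures they generate. Since ``constructible'' means constructible over $\varnothing$, fix augmented construction sequences $\Sbf^0 = (b^0_i,\varphi^0_i,C^0_i,\Sc^0_i)_{i<\alpha^0}$ and $\Sbf^1 = (b^1_i,\varphi^1_i,C^1_i,\Sc^1_i)_{i<\alpha^1}$ over $\varnothing$ enumerating $M^0$ and $M^1$. I would build, by recursion on $\xi$, an increasing continuous chain of self-sufficient pairs $(\varnothing, X^0_\xi)$ in $\Sbf^0$ and $(\varnothing, X^1_\xi)$ in $\Sbf^1$ with $\Sc^0_{X^0_\xi} = \Sc^1_{X^1_\xi}$, together with densely defined $\Lc$-isomorphisms $g_\xi : b^0_{\in X^0_\xi} \to \ol{b^1_{\in X^1_\xi}}$ satisfying $g_\eta \subseteq g_\xi$ for $\eta < \xi$. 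A bookkeeping scheme interleaving the two sides will guarantee that at the end of the recursion $X^0 = \alpha^0$ and $X^1 = \alpha^1$; the final $g$ is then a densely defined $\Lc$-isomorphism from a dense subset of $M^0$ onto a dense subset of $M^1$, and its unique continuous extension is the desired isomorphism $M^0 \cong M^1$.

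For the successor step, suppose $g_\xi$, $(\varnothing,X^0_\xi)$, and $(\varnothing,X^1_\xi)$ are given. Using \cref{prop:construction-shuffle}, the tails $\Sbf^0\res(\alpha^0\setminus X^0_\xi)$ and $\Sbf^1\res(\alpha^1\setminus X^1_\xi)$ are augmented construction sequences over the (generally uncountable) bases $b^0_{\in X^0_\xi}$ and $b^1_{\in X^1_\xi}$, over which $g_\xi$ is a densely defined isomorphism. I would then apply the main technical lemma, \cref{lem:self-suff-exists-duo}, with $f = g_\xi$, feeding in the next target indices (from the bookkeeping) on each side, to obtain countable self-sufficient increments $(\tilde B^0,\tilde X^0)$ and $(\tilde B^1,\tilde X^1)$ relative to these tails, with $\Sc^0_{\tilde X^0} = \Sc^1_{\tilde X^1}$, countable anchors $\tilde B^0 \subseteq b^0_{\in X^0_\xi}$ and $\tilde B^1 \subseteq b^1_{\in X^1_\xi}$, and $g_\xi\res\tilde B^0$ a densely defined isomorphism. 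Since the anchors are now countable, \cref{lem:dense-iso-extend} extends $g_\xi\res\tilde B^0$ to a densely defined $\Lc$-isomorphism $h$ from $\tilde B^0 b^0_{\in\tilde X^0}$ onto $\ol{\tilde B^1 b^1_{\in\tilde X^1}}$.

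It remains to glue $g_\xi$ and $h$. They agree on the common domain $\tilde B^0$, and by \cref{lem:const-seq-weak-orth} applied to the shuffled tail over the base $b^0_{\in X^0_\xi}$ with construction-closed pair $(\tilde B^0,\tilde X^0)$ we have $b^0_{\in \tilde X^0}\indwo_{\tilde B^0} b^0_{\in X^0_\xi}$, hence $b^0_{\in X^0_\xi}\indwo_{\tilde B^0}\tilde B^0 b^0_{\in\tilde X^0}$ by symmetry together with \cref{lem:can't-believe-it's-not} part~\ref{entails-0}. The amalgamation lemma, \cref{lem:dense-iso-amalg}, then yields that $g_{\xi+1} := g_\xi\cup h$ is a densely defined $\Lc$-isomorphism onto the structure generated by $b^1_{\in X^1_\xi}\cup b^1_{\in\tilde X^1}$, so setting $X^0_{\xi+1} = X^0_\xi\cup\tilde X^0$ and $X^1_{\xi+1} = X^1_\xi\cup\tilde X^1$ completes the step; one checks directly that the new pairs are construction-closed and, using the self-sufficiency of the increments, self-sufficient, and that the language-matching invariant persists. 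At limit stages I would take unions, using \cref{lem:asc-chain-ok} to see that the union pairs remain augmented construction sequences (hence self-sufficient) and observing that an increasing union of densely defined $\Lc$-isomorphisms with matching reduct-languages is again one.

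The main obstacle is precisely this successor-stage gluing: the amalgamation lemma needs the weak orthogonality of the freshly added countable increment over its countable anchor relative to the \emph{entire} previously constructed part, which may be uncountable, whereas the only lemmas producing $\Lc$-isomorphisms (\cref{lem:dense-iso-extend}, \cref{lem:S-iso-is-iso}) operate over countable bases. Reconciling these is exactly what the shuffling of \cref{prop:construction-shuffle} (resetting the base to the already-covered part) together with \cref{lem:const-seq-weak-orth} is engineered to supply. A secondary point demanding care is confirming that self-sufficiency and the matching $\Sc^0_{X^0_\xi} = \Sc^1_{X^1_\xi}$ are genuinely preserved through both successor and limit stages, since these are the hypotheses that license upgrading $\Sc$-isomorphisms to $\Lc$-isomorphisms.
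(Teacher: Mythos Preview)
Your approach is essentially the paper's: the same transfinite back-and-forth, the same use of \cref{lem:self-suff-exists-duo} on the shuffled tails, \cref{lem:dense-iso-extend} to extend over the countable increment, \cref{lem:const-seq-weak-orth} plus \cref{lem:dense-iso-amalg} to glue, and \cref{lem:asc-chain-ok} at limits.

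The one point that needs correction is your choice of global invariant. You carry self-sufficiency of $(\varnothing,X^0_\xi)$ and the language-matching $\Sc^0_{X^0_\xi}=\Sc^1_{X^1_\xi}$ through the recursion, asserting that ``these are the hypotheses that license upgrading $\Sc$-isomorphisms to $\Lc$-isomorphisms.'' Neither claim is right. Preservation of self-sufficiency at successor stages is not justified: the closure of $b^0_{\in X^0_\xi}\cup b^0_{\in\tilde X^0}$ need not be an $(\Sc^0_{X^0_\xi}\cup\Sc^0_{\tilde X^0})$-elementary substructure merely because each piece is elementary for its own sublanguage, and \cref{lem:asc-chain-ok} says nothing about self-sufficiency at limits. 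Moreover these invariants are unnecessary: the upgrading from $\Sc$- to $\Lc$-elementarity happens only locally, inside \cref{lem:dense-iso-extend}, using the self-sufficiency and language-matching of the \emph{countable increments} produced by \cref{lem:self-suff-exists-duo}; the accumulated $g_\xi$ is already $\Lc$-elementary as a union of $\Lc$-elementary maps. The paper accordingly tracks only the weaker invariant $\star$---that $\Sbf^k\res(\alpha^k\setminus Z^k_j)$ is an augmented construction sequence over $b^k_{\in Z^k_j}$---which is exactly what \cref{prop:construction-shuffle} (successor) and \cref{lem:asc-chain-ok} (limit) deliver, and is all you actually use. Replace your invariant with construction-closure (equivalently $\star$) and the argument goes through verbatim.
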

\begin{proof}
  Fix constructible models $M^0$ and $M^1$ of a complete theory $T$. Fix augmented construction sequences $\Sbf^0 = (b_i^0,\varphi_i^0,C_i^0,\Sc_i^0)_{i < \alpha^0}$ and $\Sbf^1 = (b_i^1,\varphi_i^1,C_i^1,\Sc_i^1)_{i < \alpha^1}$ over $\varnothing$ enumerating $M^0$ and $M^1$ respectively.

  We will build by transfinite induction sets $Z^0_j \subseteq \alpha^0$ and $Z^1_j \subseteq \alpha^1$ and densely defined isomorphisms $f_j : b_{\in Z^0_j}^0 \to \ol{b_{\in Z^1_j}^1}$ satisfying that
  \begin{itemize}
  \item for $j<k$, $Z^0_j \subseteq Z^0_k$, $Z^1_j \subseteq Z^1_k$, and $f_j$ is extended by $f_k$,
  \item if $j < \alpha^0$, then $b_j^0 \in Z^0_{j+1}$ and if $j < \alpha^1$, then $b_j^1 \in Z^1_{j+1}$, and
  \item[$\star$] for each $j$, $\Sbf^0\res (\alpha^0 \setminus Z^0_j)$ is an augmented construction sequence over $b_{\in Z^0_j}^0$ and $\Sbf^1 \res (\alpha^1\setminus Z^1_j)$ is an augmented construction sequence over $b_{\in Z^1_j}^1$.
  \end{itemize}
  Assume that we have built this up to some limit ordinal $j$. If we let $Z^0_j = \bigcup_{k < j}Z^0_k$ and likewise for $Z^1_j$ and $f_j$, then it is easy to check that $f_j : b^0_{\in Z^0_j} \to \ol{b^1_{\in Z^1_j}}$ is a densely defined isomorphism. Furthermore, $\star$ holds by \cref{lem:asc-chain-ok}.
  This means that we only really need to worry about successor stages.

  Assume that we have $Z^0_j$, $Z^1_j$, and a densely defined isomorphism $f_j : b^0_{\in Z^0_j} \to \ol{b^1_{\in Z^1_j}}$ such that $\star$ holds. Apply \cref{lem:self-suff-exists-duo} %
  to the construction sequences $\Sbf^0\res (\alpha^0\setminus Z^0_j)$ and $\Sbf^1\res(\alpha^1 \setminus Z^1_j)$ to get countable self-sufficient pairs $(A^0_j,X^0_j)$ and $(A^1_j,X^1_j)$ such that
  \begin{itemize}
  \item $A^0_j \subseteq b^0_{\in Z^0_j}$ and $A^1_j \subseteq b^1_{\in Z^1_j}$,
  \item $X^0_j\subseteq \alpha^0\setminus Z^0_j$ and $X^1_j \subseteq \alpha^1\setminus Z^1_j$,
  \item if $j \in \alpha^0 \setminus Z^0_j$, then $j \in X^0_j$ and if $j \in \alpha^1 \setminus Z^1_j$, then $j \in X^1_j$,
  \item $\Sc^0_{X^0_j} = \Sc^1_{X^1_j}$, and
  \item $f_j \res A^0_j$ is a densely defined isomorphism between $A^0_j$ and $A^1_j$.
  \end{itemize}
  By \cref{lem:dense-iso-extend}, we can find a densely defined isomorphism $g_j : A^0_jb^0_{\in Z^0_j} \to \ol{A^1_j b^1_{\in Z^1_j}}$ extending $f_j\res A^0_j$. We now need to argue that $f_j \cup g_j$ is a densely defined isomorphism between $\{b^0_k : k \in Z^0_j \cup X^0_j\}$ and $\{b^1_k : k \in Z^1_j \cup X^1_j\}$. Since $A^0_j \subseteq b^0_{\in Z^0_j}$, it is immediate that $(b^0_{\in Z^0_j},X^0_j)$ is a construction-closed pair. By \cref{lem:const-closed-restrict}, we have that $(\Sc\res \alpha^0\setminus Z^0_j) \res X^0_j = \Sc \res X^0_j$ is an augmented construction sequence over $b^0_{\in Z^0_j}$. Moreover, $(A^0_j,X^0_j)$ is construction-closed in $\Sc\res X^0_j$ over $b^0_{\in Z^0_j}$. By Lemmas~\ref{lem:const-seq-weak-orth} and \ref{lem:can't-believe-it's-not}, $b^0_{\in X^0_j} \indwo_{A^0_j}b^0_{\in Z^0_j}$. Therefore by \cref{lem:dense-iso-amalg}, we have that $f_j \cup g_j$ is a densely defined isomorphism between $b^0_{\in X^0_j}\cup b^0_{\in Z^0_j}$ and $b^1_{\in X^1_j}\cup b^1_{\in Z^1_j}$. Finally note that $\Sbf^0\res (\alpha^0\setminus Z^0_{j+1})$ is an augmented construction sequence over $b^0_{\in Z^0_{j+1}}$ by \cref{prop:construction-shuffle}, and likewise for $\Sbf^1\res (\alpha^1\setminus Z^1_{j+1})$ over $b^1_{\in Z^1_{j+1}}$.

Since can run this construction indefinitely, we will have that $f_{\max\{\alpha^0,\alpha^1\}}$ is an isomorphism between $M^0$ and $M^1$. 
\end{proof}

Note that an immediate corollary of \cref{thm:main-theorem} is that if $M^0$ and $M^1$ are both constructible models of the same complete theory over a set of parameters $A$, then there is an isomorphism between $M^0$ and $M^1$ fixing $A$ pointwise.

\bibliographystyle{plain}
\bibliography{../ref}

\begin{thebibliography}{1}

\bibitem{Baldwin1987}
John~T. Baldwin.
\newblock {\em Fundamentals of stability theory}.
\newblock Perspectives in Mathematical Logic. Springer, 1 edition, 1987.

\bibitem{MTFMS}
Ita{\"i} Ben~Yaacov, Alexander Berenstein, C.~Ward Henson, and Alexander
  Usvyatsov.
\newblock {\em Model theory for metric structures}, volume~2 of {\em London
  Mathematical Society Lecture Note Series}, pages 315--427.
\newblock Cambridge University Press, 2008.

\bibitem{Yaacov2007}
Ita{\"i} Ben~Yaacov and Alexander Usvyatsov.
\newblock On d-finiteness in continuous structures.
\newblock {\em Fundamenta Mathematicae}, 194(1):67--88, 0 2007.

\bibitem{Shelah1972}
Saharon Shelah.
\newblock Uniqueness and characterization of prime models over sets for totally
  transcendental first-order theories.
\newblock {\em Journal of Symbolic Logic}, 37(1):107–113, March 1972.

\bibitem{Shelah1979}
Saharon Shelah.
\newblock On uniqueness of prime models.
\newblock {\em Journal of Symbolic Logic}, 44(2):215–220, June 1979.

\bibitem{Vaught1959}
R.~L. Vaught.
\newblock Denumerable models of complete theories.
\newblock In {\em Infinistic Methods: Proceedings of the Symposium on
  Foundations of Mathematics, Warsaw, 2-9 September 1959}, pages 303--321.
  Pergamon Press, 1961.

\end{thebibliography}

\end{document}